\newtheorem{theorem}{Theorem}[section]
\newtheorem{proposition}[theorem]{Proposition}
\newtheorem{lemma}[theorem]{Lemma}
\newtheorem*{claim*}{Claim}
\newtheorem{corollary}[theorem]{Corollary}
\newtheorem{Main Conjecture}[theorem]{Main Conjecture}
\newtheorem{conjecture}[theorem]{Conjecture}
\newtheorem{problem}[theorem]{Problem}
\theoremstyle{definition}
\newtheorem{definition}[theorem]{Definition}
\theoremstyle{remark}
\newtheorem{example}[theorem]{Example}
\theoremstyle{plain}
\newcommand\complexes{{\mathbb C}}
\newcommand\integers{{\mathbb Z}}
\newcommand\naturals{{\mathbb N}}
\newcommand\init{{\mathrm{init}}}
\newcommand{\cellsize}{12}
\newlength{\cellsz} \setlength{\cellsz}{\cellsize\unitlength}
\newsavebox{\cell}
\sbox{\cell}{\begin{picture}(\cellsize,\cellsize)
\put(0,0){\line(1,0){\cellsize}}
\put(0,0){\line(0,1){\cellsize}}
\put(\cellsize,0){\line(0,1){\cellsize}}
\put(0,\cellsize){\line(1,0){\cellsize}}
\end{picture}}
\newcommand\cellify[1]{\def\thearg{#1}\def\nothing{}%
\ifx\thearg\nothing
\vrule width0pt height\cellsz depth0pt\else
\hbox to 0pt{\usebox{\cell} \hss}\fi%
\vbox to \cellsz{
\vss
\hbox to \cellsz{\hss$#1$\hss}
\vss}}
\newcommand\tableau[1]{\vtop{\let\\\cr
\baselineskip -16000pt \lineskiplimit 16000pt \lineskip 0pt
\ialign{&\cellify{##}\cr#1\crcr}}}
\newcommand{\excise}[1]{}
\begin{document}
\pagestyle{plain}
\title{Combinatorial Commutative Algebra Rules}
\author{Ada Stelzer}
\author{Alexander Yong}
\address{Dept.~of Mathematics, U.~Illinois at Urbana-Champaign, Urbana, IL 61801, USA} 
\email{astelzer@illinois.edu, ayong@illinois.edu}
\date{June 1, 2023}

\begin{abstract}
An algorithm is presented that generates sets of size equal to the degree of a given variety defined by a homogeneous ideal. This algorithm suggests a versatile framework to study various problems in combinatorial algebraic geometry and related fields.
\end{abstract}
\maketitle

\vspace{-0.1in}
\section{Introduction}

Consider a homogeneous ideal $I$ in the polynomial ring $R=\mathbb{C}[x_1,\ldots,x_N]$ defining an algebraic set $V(I)$. The Hilbert polynomial $h_{R/I}(t)$ of $R/I$ captures the dimensions of $(R/I)_t$ for sufficiently large $t$. The \emph{degree} of $V(I)$, denoted $\deg V(I)$, is defined as the leading coefficient of $h_{R/I}(t)$ multiplied by $(\dim V(I))!$. The degree is a positive integer (see, e.g., \cite{Harris}). Numerous combinatorial problems in algebraic geometry and Lie theory seek ``combinatorial rules'' for determining $\deg V(I)$ for various families of ideals $I$.

In general, this report \emph{algorithmically} constructs sets ${\mathcal T}$ whose \emph{unweighted} count equals $\deg V(I)$. In many examples of interest, it indicates existence of a combinatorial rule of the specified form and produces objects called \emph{hieroglyphic tablets} for concrete examination.

\subsection{The algorithm}
The recipe proposed here combines three commutative algebra methods, namely, Gr\"obner bases, polarization, 
and prime decomposition,
as follows:
\begin{itemize}
\item[I.] Fix a term order $\prec$ on the monomials of $R$ and let $LT_{\prec}(f)$ be the $\prec$-largest monomial of $f$, i.e., the \emph{leading term} of $f$. Compute the
\emph{initial ideal}
\[J:={\mathrm{init}}_{\prec}I=\langle {LT}_{\prec}(f): f\in I\rangle\]
by determining a Gr\"obner basis of $I$ with respect to $\prec$. 
\item[II.] If $J= \sqrt J$ is radical ($J$ is a squarefree monomial ideal), set $\widetilde{J}:=J$. Otherwise, let $\widetilde{J}$ be the (unique) \emph{polarization} of $J$, using a minimal generating set of $J$.
$\widetilde{J}$ is a squarefree monomial inside an enlarged polynomial ring $\widetilde R={\mathbb C}[{\widetilde x}_1,\ldots,
{\widetilde x}_M]$. 
\item[III.] Compute the prime decomposition \[\widetilde{J}=\bigcap_{j\in \Gamma} P_j.\] 
``Draw'' a \emph{hieroglyph} ${\sf H}_j$ for each $j\in \Gamma$. This is an array indexed by the variables ${\widetilde x}_i$ with $+$ in position 
${\widetilde x}_i$ if that variable
is a generator of $P_j$. The \emph{tablet} 
${\mathcal T}={\mathcal T}(V(I),\prec)$ is the set of hieroglyphs with the minimum number of $+$'s.
\end{itemize}

\begin{theorem}\label{thm:main}
$\deg V(I)=\#{\mathcal T}(V(I),\prec)$.
\end{theorem}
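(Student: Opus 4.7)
\medskip
\noindent\textbf{Proof proposal.} The plan is to show that each of the three operations in the algorithm preserves $\deg V(I)$, so the problem reduces to reading off the degree of a squarefree monomial ideal from its minimal primes. Concretely, I will establish the chain
\[
\deg V(I) \;=\; \deg V(J) \;=\; \deg V(\widetilde{J}) \;=\; \#\mathcal{T}(V(I),\prec),
\]
corresponding respectively to Gr\"obner degeneration (I), polarization (II), and prime decomposition (III).

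For the first equality, this is the standard flatness of a Gr\"obner degeneration: there is a flat one-parameter family over $\mathbb{A}^1$ whose generic fiber is $\mathrm{Proj}(R/I)$ and whose special fiber is $\mathrm{Proj}(R/J)$, so the Hilbert polynomial (and hence degree) is constant along the family. I would simply cite this, e.g.\ from Eisenbud's commutative algebra text or the Miller--Sturmfels combinatorial commutative algebra book.

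The second equality is the crux and the place I expect to need to be most careful. The key algebraic fact about polarization is that $R/J$ can be recovered from $\widetilde{R}/\widetilde{J}$ by quotienting out a regular sequence of linear forms $L=(\widetilde{x}_i^{(j)}-\widetilde{x}_i)$ that identify the polarized variables with their originals. Thus $R/J \cong (\widetilde{R}/\widetilde{J})/(L)$, and $L$ being a regular sequence in $\widetilde{R}/\widetilde{J}$ corresponds geometrically to cutting $V(\widetilde{J})$ by transverse hyperplanes. Transverse hyperplane sections drop the dimension by one but preserve the leading coefficient of the Hilbert polynomial (hence the degree), so iterating gives $\deg V(\widetilde{J}) = \deg V(J)$. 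Verifying that $L$ is indeed regular on $\widetilde{R}/\widetilde{J}$ is the classical content of the polarization construction and I would invoke it; the mild subtlety is ensuring this works with the minimal-generating-set convention used to define $\widetilde{J}$.

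The third equality is then essentially Stanley--Reisner theory. Since $\widetilde{J}$ is squarefree, $\widetilde{J}=\bigcap_{j\in\Gamma}P_j$ is the intersection of its minimal primes, each $P_j$ being generated by a subset of the variables. Therefore $V(\widetilde{J})=\bigcup_j V(P_j)$ is a reduced union of coordinate linear subspaces, and $\dim V(P_j)=M-|\{+\text{'s in }{\sf H}_j\}|$. Because $\deg V(\widetilde{J})$ equals the sum of the degrees of the top-dimensional irreducible components of $V(\widetilde{J})$ (additivity of degree for reduced equidimensional pieces), and each coordinate linear subspace has degree $1$, the total degree equals the number of minimal primes of minimum height, which is precisely the number of hieroglyphs having the fewest $+$'s, i.e.\ $\#\mathcal{T}(V(I),\prec)$. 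Chaining the three equalities proves the theorem.
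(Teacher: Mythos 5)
Your proposal is correct, and its three-step skeleton (Gr\"obner degeneration, polarization, prime decomposition) is exactly the paper's. The differences are in how the middle and final steps are justified. For polarization, the paper works with $K$-polynomials: it proves the regular-sequence lemma (Lemma~\ref{lemma:zerodivisor}, following Herzog--Hibi), tensors a finite free resolution of $\widetilde{R}/\widetilde{J}$ with $\widetilde R/\langle y-x_i\rangle$, and checks the relevant $\mathrm{Tor}$ groups vanish, concluding $K_{R/J}=K_{\widetilde R/\widetilde J}$ (Theorem~\ref{thm:polarizationpreserves}). You instead invoke the same regular-sequence fact but deduce degree-preservation via iterated hyperplane sections; in Hilbert-series terms, quotienting by a linear non-zerodivisor multiplies $HS$ by $(1-t)$, so the numerator $K$ is unchanged --- a more elementary route that suffices in the standard grading. (One wording nit: a hyperplane section does not preserve the literal leading coefficient of the Hilbert polynomial, only the normalized one $e=(\text{lead coeff})\cdot(\dim)!$, which is what you need.) For the last step, the paper computes the multidegree from the Stanley--Reisner facet formula for $K_{\widetilde R/\widetilde J}(\mathbf{1}-\mathbf{t})$ (Theorems~\ref{thm:stanleyreisner} and~\ref{thm:Kformula}), whereas you use additivity of degree over top-dimensional components of the reduced union of coordinate subspaces, each of degree $1$; both are standard and correct. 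What the paper's heavier resolution/$K$-polynomial machinery buys is the multigraded refinement Theorem~\ref{thm:strongermain}, from which Theorem~\ref{thm:main} is then a one-line specialization; your argument proves Theorem~\ref{thm:main} directly but would need upgrading (essentially to the paper's argument) to yield the multidegree statement.
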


Call the algorithm for producing
$\#{\mathcal T}(V(I),\prec~)$ a
\emph{combinatorial commutative algebra rule}.
Step~II enables the universal application of the \emph{Stanley-Reisner correspondence}. Thus, $\#{\mathcal T}$ counts maximum-dimensional facets of the Stanley-Reisner complex for the squarefree monomial ideal~$\widetilde{J}$.

The motivation for Theorem~\ref{thm:main} stems from the influential work of Knutson--Miller on the Gr\"obner geometry of Schubert polynomials \cite{Knutson.Miller} and its follow-up research, see, e.g., \cite{KMS, Knutson, KMY, WY:Grob, Knutson:frob, LiLiYong, Weigandt.Yong,
Weigandt, Pawlowski, HPW, Klein.Weigandt} and references therein. In many instances, non-radical initial ideals have been viewed as obstacles, and one resorts to weighted counts when they cannot be circumvented. However, in combinatorial commutative algebra it is well-established that the polarization of monomial ideals reduces the problem to the radical case while preserving the degree. Based on the exploration of examples, it seems that integrating polarization into the framework of ideas presented in \cite{Knutson.Miller} and its related works is a crucial and necessary ingredient. This addition becomes particularly relevant when squarefree limits cannot exist, such as in cases where $V(I)$ itself is non-reduced.

\subsection{Warmup} We start with  two toy examples to illustrate the character of Theorem~\ref{thm:main}.

\begin{example}[Rank $\leq 1$ matrices]\label{exa:firstexa}
Let $I$ be the ideal generated by $2\times 2$ minors of a generic $3\times 3$ matrix
$Z=\left(\begin{smallmatrix} x_{11} & x_{12} & x_{13} \\ x_{21} & x_{22} & x_{23} \\ x_{31} & x_{32} & x_{33} \end{smallmatrix}\right)$. Let $\prec$ be the lexicographic order obtained by reading the rows of $Z$ in English
order. Here the initial ideal is squarefree:
\[{\widetilde J}=J=\langle x_{22}x_{33}, x_{21}x_{33}, x_{21}x_{32}, x_{12}x_{33}, x_{12}x_{23}, x_{11}x_{33}, x_{11}x_{32}, x_{11}x_{23}, x_{11}x_{22}\rangle\]
The primary decomposition is equidimensional:
\begin{align*} 
{\widetilde J}= & \langle x_{22}, x_{21}, x_{12}, x_{11}\rangle \cap
\langle x_{33}, x_{21}, x_{12}, x_{11}\rangle \cap 
\langle x_{33}, x_{32}, x_{12}, x_{11}\rangle \\ 
\ & \cap \langle x_{33}, x_{23}, x_{21}, x_{11}\rangle 
\cap \langle x_{33}, x_{32}, x_{23}, x_{11}\rangle \cap \langle x_{33}, x_{32}, x_{23}, x_{22}\rangle
\end{align*}
For each component, if $x_{ij}$ is a generator, we plot a $+$ in row $i$ and column $j$ in the associated hieroglyph. The resulting tablet is:
\[
\boxed{\boxed{\begin{matrix}
+ & + & \cdot\\
+ & + & \cdot\\
\cdot & \cdot & \cdot\end{matrix}} \ \  
\boxed{\begin{matrix}
+ & + & \cdot\\
+ & \cdot & \cdot\\
\cdot & \cdot & +\end{matrix}} \ \  
\boxed{\begin{matrix}
+ & + & \cdot\\
\cdot & \cdot & \cdot\\
\cdot & + & + \end{matrix}}
\ \ 
\boxed{\begin{matrix}
+ & \cdot & \cdot\\
+ & \cdot & +\\
\cdot & \cdot & + \end{matrix}}
\ \   
\boxed{\begin{matrix}
+ & \cdot & \cdot\\
\cdot & \cdot & +\\
\cdot & + & + \end{matrix}}
\ \   
\boxed{\begin{matrix}
\cdot & \cdot & \cdot\\
\cdot & + & +\\
\cdot & + & + \end{matrix}}}.
\]
Therefore, 
$\deg(\text{rank $\leq 1$, order $3$ matrices})=6$. The degree equals the number of semistandard tableaux of shape $(2,2,0)$ using entries from
$\{1,2,3\}$, namely
\[\tableau{1 & 1\\ 2 & 2} \ \ \ \  \tableau{1 & 1 \\ 2 & 3} \ \ \ \ \tableau{ 1 & 1 \\ 3 & 3}  \ \ \ \ 
\tableau{1 & 2 \\ 2 & 3} \ \ \ \ \tableau{1 & 2 \\ 3 & 3} \ \ \ \ \tableau{2 & 2 \\ 3 & 3}.\footnote{The count is also the dimension of the irreducible representation $V_{(2,2,0)}$ of $GL_3({\mathbb C})$.} 
\] 
It is an exercise to give a natural
bijection between the tablet and the tableaux.\footnote{The general solution can be found in \cite{KMY}.} In this case, Theorem~\ref{thm:main} produces a tablet that can be understood combinatorially. The interpretation depends on reading the tablet \emph{as a whole}, rather than the individual hieroglyphs.  \qed
\end{example}

\begin{example}[Rank $\leq 1$ symmetric matrices]
Let $I$ be generated by $2\times 2$ minors of a generic $3\times 3$ symmetric matrix
$Z=\left(\begin{smallmatrix} x_{11} & x_{12} & x_{13} \\ x_{12} & x_{22} & x_{23} \\ x_{13} & x_{23} & x_{33} \end{smallmatrix}\right)$.
Let $\prec$ be reverse lexicographic order. 
In this case the initial ideal is not squarefree:
\[J=\langle x_{23}^2, x_{13}x_{23}, x_{13}x_{22}, x_{13}^2, x_{12}x_{13}, x_{12}^2\rangle.\footnote{If one instead uses lexicographic
order, the initial ideal is squarefree; this emphasizes the role of choice.}\] 
Polarization of the initial ideal is needed. Each $x_{ij}^2$ that appears in $J$ becomes $y_{ij}z_{ij}$
in a new polynomial ring where $x_{ij}$ is replaced with $y_{ij}$ and $z_{ij}$.  Thus,
\[{\widetilde R}={\mathbb C}[x_{11},y_{12},z_{12},y_{13},z_{13},x_{22},y_{23},z_{23},x_{33}], \widetilde{J} = \langle y_{23}z_{23}, y_{13}y_{23}, y_{13}x_{22}, y_{	13}z_{13}, y_{12}y_{13}, y_{12}z_{12}\rangle.\]
$\widetilde{J}$ is not equidimensional:
\[\widetilde{J}= \langle y_{23}, y_{13}, y_{12}\rangle \cap 
\langle z_{23}, y_{13}, y_{12}\rangle \cap 
\langle y_{23}, y_{13}, z_{12}\rangle
\cap 
\langle z_{23}, y_{13}, z_{12}\rangle \cap \langle y_{23}, x_{22}, 
z_{13}, y_{12}\rangle.\]
Only the first four components contribute to the tablet. ${\sf H}_j$
has a $+$ in position $(i,j)$ if $x_{ij}$ or $y_{ij}$ is a generator of $P_j$; otherwise use $\oplus$.\footnote{We think of the markings in a hieroglyph as carvings with different depths.} The tablet is:
\[\boxed{\boxed{\begin{matrix} \cdot & + & + \\ \ & \cdot & + \\ \ & \ & \cdot\end{matrix}} \ \   
\boxed{\begin{matrix} \cdot & + & + \\ \ & \cdot & \oplus \\ \ & \ & \cdot\end{matrix}} \ \  
\boxed{\begin{matrix} \cdot & \oplus & + \\ \ & \cdot & + \\ \ & \ & \cdot\end{matrix}} \ \  
\boxed{\begin{matrix} \cdot & \oplus & + \\ \ & \cdot & \oplus \\ \ & \ & \cdot\end{matrix}}}
\]
Hence, 
$\deg(\text{rank $\leq 1$, order $3$ symmetric matrices})=4$. What is the interpretation?\qed
\end{example}

In these examples,
the natural matrix coordinates for the problem make the hieroglyphs graphical and combinatorially suggestive. The same holds for the main examples.

\subsection{Summary of the rest of this study}
Section~\ref{sec:2} provides a comprehensive review of relevant concepts from combinatorial commutative algebra and presents the proof of Theorem~\ref{thm:main}. In fact, a generalization to the multigraded case (Theorem~\ref{thm:strongermain}) is proved.

We then proceed to discuss three illustrative examples, recognizing that there remains a vast landscape to be explored with the algorithm.
 
Section~\ref{sec:MSV} examines \emph{matrix Schubert varieties}.
Theorem~\ref{thm:main} implies the
existence of a bijection between tablets for \emph{any} term order. Notably, for ``diagonal'' term orders this deviates from the approach of \cite{HPW,Klein.Weigandt} 
where multiplicities are used. New conjectures and problems arise as a result.

Section~\ref{sec:Commuting} takes a soujourn into the topic of the  \emph{commuting variety}. Knutson--Zinn-Justin have found an elegant weighted formula for the degree.
While this work does not offer a complete general formula like theirs, the theory reveals the existence of \emph{unweighted} rules. One sees explicit demonstrations of this for $n=2$ and $n=3$. From examining the $n=3$ tablet,  essential distinctions from the objects in their formula are
observed.

Section~\ref{sec:HS} considers \emph{Hilbert-Samuel multiplicities}. By virtue of Theorem~\ref{thm:main}, one surmises the existence of a specific form of rule, extending  \cite[Theorem~6.1]{LiLiYong} for Schubert varieties.

Theorem~\ref{thm:main} not only provides a systematic framework for obtaining concrete and visual sets with the desired enumerations, but also suggests that the solution to combinatorial challenges lies in deciphering the hieroglyphic tablets.

\section{Proof of Theorem~\ref{thm:main}}\label{sec:2}
\subsection{Preliminaries}
We review Hilbert series, $K$-polynomials, and multidegrees, as well as their connection to minimal free resolutions. Our primary reference is \cite{Miller.Sturmfels}. Let $R$ be the polynomial ring $\complexes[x_1,\dots, x_N]$. We give $R$ the standard grading (each $x_i$ has degree 1) and more generally a positive multigrading by assigning each $x_i$ a \textit{weight vector} $\mathbf{w}_i$. If $I$ is a (multigraded) homogeneous ideal in $R$, then $R/I$ decomposes as a direct sum of vector spaces $(R/I)_k$ (or $(R/I)_{\mathbf{a}}$) spanned by the degree-$k$ (or multidegree $\mathbf{a}$) polynomials in $R/I$. These vector spaces are all finite-dimensional because the (multi)grading on $R/I$ is positive \cite[Theorem 8.6]{Miller.Sturmfels}.

\begin{definition}
    Let $I\subseteq R$ be a homogeneous ideal. The \textit{Hilbert function} 
    $HF_{R/I}:\naturals\to\naturals$
    is the function defined by $HF_{R/I}(k) =  \dim_\complexes(R/I)_k$. The \textit{Hilbert series} of $R/I$ is the formal generating series for the Hilbert function:
    $$HS_{R/I}(t) = \sum_{k=0}^\infty HF_{R/I}(k)t^k.$$
\end{definition}

$HS_{R/I}(t)$ can be rewritten as $\frac{K_{R/I}(t)}{(1-t)^N}$, where the numerator is a polynomial in $t$ called the \textit{K-polynomial} of $R/I$. The \textit{degree} of $R/I$ is the coefficient on the lowest-degree term of $K_{R/I}(1-t)$. Analogously, the \textit{multigraded Hilbert series} is the formal generating series 
$$HS_{R/I}(\mathbf{t}) = \sum_{\mathbf{a}\in\integers^N}\dim_\complexes(R/I)_{\mathbf{a}}\ \mathbf{t}^{\mathbf{a}} = \frac{K_{R/I}(\mathbf{t})}{\prod_{i=1}^N(1-\mathbf{t}^{\mathbf{w}_i})}.$$
The numerator of this last expression is the \textit{multigraded K-polynomial} of $R/I$, and the \textit{multidegree} ${\mathrm {mdeg}}_{R/I}(\mathbf{t})$ of $R/I$ is the sum of the terms of lowest total degree in $K_{R/I}(\mathbf{1}-\mathbf{t})$. The multigraded $K$-polynomial is a Laurent polynomial \cite[Theorem 8.20]{Miller.Sturmfels}.

The multigraded $K$-polynomial of $R/I$ can also be computed using finite free resolutions. For a vector $\mathbf{a}$ in $\integers^N$, let $R(-\mathbf{a})$ be the free multigraded $R$-module in which $x_i$ has weight $\mathbf{a}+\mathbf{w}_i$. A \textit{(finite multigraded) free resolution} $F_\bullet$ of $R/I$ is a sequence of free $R$-modules connected by multidegree-0 maps $\partial_i$ in the following form:
$$0\to \bigoplus_{\mathbf{a}\in\integers^N} S(-\mathbf{a})^{\oplus b_{k,\mathbf{a}}}\xrightarrow{\partial_k}\bigoplus_{\mathbf{a}\in\integers^N} S(-\mathbf{a})^{\oplus b_{k-1,\mathbf{a}}}\xrightarrow{\partial_{k-1}}\cdots\xrightarrow{\partial_1}\bigoplus_{\mathbf{a}\in\integers^N} S(-\mathbf{a})^{\oplus b_{0,\mathbf{a}}}\to R/I\to 0.$$
We require $F_\bullet$ be \textit{exact}, meaning the image of each map is the kernel of the next. Finite free resolutions of this form always exist for $R/I$ \cite[Proposition 8.18]{Miller.Sturmfels}. A free resolution is \textit{minimal} if it simultaneously minimizes the values of all $b_{i, \mathbf{a}}$. Minimal free resolutions are unique up to isomorphism. If $F_\bullet$ is the minimal free resolution for $R/I$ then the associated values $b_{i, \mathbf{a}}$ are called the \textit{(multigraded) Betti numbers} of $R/I$ and denoted $\beta_{i,\mathbf{a}}$. The multigraded $K$-polynomial of $R/I$ can be computed directly from these Betti numbers:

\begin{proposition}[{\cite[Proposition 8.23]{Miller.Sturmfels}}]
    If $I\subseteq R$ is a multigraded homogeneous ideal, then
    $$K_{R/I}(\mathbf{t}) = \sum_{\substack{\mathbf{a}\in\integers^N \\ i\geq 0}}(-1)^i\beta_{i,\mathbf{a}}\mathbf{t}^{\mathbf{a}}.$$
\end{proposition}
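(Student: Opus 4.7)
The plan is to apply the Euler characteristic principle to the minimal multigraded free resolution $F_\bullet$ of $R/I$ guaranteed by \cite[Proposition 8.18]{Miller.Sturmfels}, extracting $K_{R/I}(\mathbf{t})$ by comparing two expressions for the multigraded Hilbert series. Because the multigrading is positive, each multigraded component $(R/I)_{\mathbf{a}}$ and $(F_i)_{\mathbf{a}}$ is a finite-dimensional $\complexes$-vector space by \cite[Theorem 8.6]{Miller.Sturmfels}, so $HS_{R/I}(\mathbf{t})$ and $HS_{F_i}(\mathbf{t})$ are well-defined formal sums that can be manipulated coefficient-by-coefficient.

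First, the plan is to establish that the multigraded Hilbert series is additive on short exact sequences of multigraded modules with degree-preserving maps: from $0\to A\to B\to C\to 0$ and the identity $\dim B_{\mathbf{a}}=\dim A_{\mathbf{a}}+\dim C_{\mathbf{a}}$ in each multidegree, one obtains $HS_B(\mathbf{t})=HS_A(\mathbf{t})+HS_C(\mathbf{t})$. Splitting the bounded exact complex $\cdots\to F_1\to F_0\to R/I\to 0$ into short exact sequences via kernels and images, and inducting on length, then yields the Euler characteristic identity
\[
HS_{R/I}(\mathbf{t})=\sum_{i\ge 0}(-1)^i HS_{F_i}(\mathbf{t}).
\]

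Second, the plan is to compute $HS_{R(-\mathbf{a})}(\mathbf{t})$ directly. Since $R(-\mathbf{a})$ is $R$ with its grading shifted so that the generator sits in multidegree $\mathbf{a}$, the monomial basis $\{x^{\mathbf{c}}\}$ has multidegrees $\mathbf{a}+\sum_j c_j\mathbf{w}_j$. The geometric series identity $\sum_{\mathbf{c}\in\naturals^N}\prod_j(\mathbf{t}^{\mathbf{w}_j})^{c_j}=\prod_j(1-\mathbf{t}^{\mathbf{w}_j})^{-1}$ gives $HS_{R(-\mathbf{a})}(\mathbf{t})=\mathbf{t}^{\mathbf{a}}/\prod_j(1-\mathbf{t}^{\mathbf{w}_j})$. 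Substituting $F_i=\bigoplus_{\mathbf{a}} R(-\mathbf{a})^{\oplus\beta_{i,\mathbf{a}}}$ into the Euler characteristic identity and comparing with the defining factorization $HS_{R/I}(\mathbf{t})=K_{R/I}(\mathbf{t})/\prod_j(1-\mathbf{t}^{\mathbf{w}_j})$ gives the claim after clearing the common denominator.

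The main subtlety will be justifying the infinite sums over $\mathbf{a}$: a priori the Betti numbers $\beta_{i,\mathbf{a}}$ might be nonzero for infinitely many shifts, so one must check that all rearrangements above make sense coefficient-by-coefficient. Positivity of the multigrading resolves this: for any fixed multidegree $\mathbf{b}$, only finitely many pairs $(\mathbf{a},\mathbf{c})$ can satisfy $\mathbf{a}+\sum_j c_j\mathbf{w}_j=\mathbf{b}$, so each coefficient on each side is a finite sum of integers. Once that finiteness is secured the identity becomes a formal consequence of the Euler characteristic computation, and equality of the numerators in the factorization of $HS_{R/I}(\mathbf{t})$ recovers the stated Betti-number expression for $K_{R/I}(\mathbf{t})$.
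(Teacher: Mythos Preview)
The paper does not supply its own proof of this proposition; it is quoted directly from \cite[Proposition~8.23]{Miller.Sturmfels} as a black box and then used downstream. Your argument is the standard Euler-characteristic proof and is essentially correct.

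One small point deserves tightening. Your finiteness justification at the end leans on positivity of the multigrading to bound the pairs $(\mathbf{a},\mathbf{c})$, but as literally stated the claim ``for any fixed multidegree $\mathbf{b}$, only finitely many pairs $(\mathbf{a},\mathbf{c})$ can satisfy $\mathbf{a}+\sum_j c_j\mathbf{w}_j=\mathbf{b}$'' is false if $\mathbf{a}$ is allowed to range over all of $\integers^N$: for every $\mathbf{c}\in\naturals^N$ there is a unique such $\mathbf{a}$. The actual reason the sums are harmless is that each $F_i$ in the resolution furnished by \cite[Proposition~8.18]{Miller.Sturmfels} is a \emph{finitely generated} free module, so $\beta_{i,\mathbf{a}}\neq 0$ for only finitely many shifts $\mathbf{a}$ (and there are only finitely many homological degrees $i$). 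Positivity is then needed only to guarantee that each graded piece is finite-dimensional and that $\prod_j(1-\mathbf{t}^{\mathbf{w}_j})^{-1}$ expands as a well-defined formal series. With that adjustment your argument goes through cleanly.
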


In fact, the $\beta_{i,\mathbf{a}}$ in the above formula can be replaced by the values $b_{i,\mathbf{a}}$ appearing in any finite free resolution of $R/I$ \cite[Theorem 8.34]{Miller.Sturmfels}. We will use this fact below to show that polarization preserves $K$-polynomials.

\subsection{Gr\"obner degeneration}
The first step of our algorithm involves computing the \textit{initial ideal} $\init_\prec(I) = \langle LT_\prec(f):f\in I\rangle$ for a chosen term order $\prec$. 

\begin{definition}
    A \textit{Gr\"obner basis} for an ideal $I\subset R$ and term order $\prec$ is a finite set $\{g_1,\dots,g_s\}$ such that $\init_\prec(I) = \langle LT_\prec(g_1),\dots, LT_\prec(g_s)\rangle$. 
\end{definition}

Given any generating set $G$ for an ideal $I\subseteq \complexes[x_1,\dots,x_N]$, \textit{Buchberger's algorithm} enlarges $G$ to a Gr\"obner basis for $I$ with respect to $\prec$ \cite[Theorem 2.7.2]{CLO}. 
Futhermore, dividing a polynomial $f$ by the elements of any Gr\"obner basis $G$ for $I$ yields a unique remainder $r$ such that $f-r\in I$ and no term of $r$ is divisible by the lead term of any $g\in G$ \cite[Proposition 2.6.1]{CLO}. This remainder is  the \textit{normal form} of $f$ with respect to $I$ (and $\prec$).

\begin{lemma}\label{lemma:monomialbasis}
    Let $I\subseteq R = \complexes[x_1,\dots,x_N]$ be an ideal. Then $R/I$ has basis given by monomials not in $\init_\prec(I)$ when viewed as a vector space over $\complexes$.
\end{lemma}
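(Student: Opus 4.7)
The plan is to use Buchberger's algorithm together with the normal form property cited just before the lemma. Fix a Gr\"obner basis $G = \{g_1,\dots,g_s\}$ for $I$ with respect to $\prec$. I want to show two things: the images of monomials $m \notin \init_\prec(I)$ span $R/I$ as a $\complexes$-vector space, and they are linearly independent in $R/I$.

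For the spanning claim, I would take an arbitrary $f \in R$ and divide by $G$ using the multivariate division algorithm, producing a remainder $r$ such that $f - r \in I$ and no term of $r$ is divisible by any $LT_\prec(g_i)$. By definition of a Gr\"obner basis, $\init_\prec(I) = \langle LT_\prec(g_1),\dots,LT_\prec(g_s)\rangle$, so a monomial lies in $\init_\prec(I)$ exactly when it is divisible by some $LT_\prec(g_i)$. Hence every term of $r$ is a monomial outside $\init_\prec(I)$, and $f \equiv r$ in $R/I$. This exhibits the required spanning set.

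For linear independence, I would argue by contradiction. Suppose there is a nontrivial $\complexes$-linear relation $\sum_m c_m m \in I$ with each $m \notin \init_\prec(I)$ and some $c_m \neq 0$. Then $f := \sum_m c_m m$ is a nonzero element of $I$, so $LT_\prec(f) \in \init_\prec(I)$. But $LT_\prec(f)$ is one of the monomials $m$ appearing with a nonzero coefficient, and by assumption no such $m$ lies in $\init_\prec(I)$ --- a contradiction. Therefore the images of the monomials outside $\init_\prec(I)$ are linearly independent in $R/I$, completing the proof.

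The argument is essentially formal once the two cited facts (Buchberger's algorithm producing a Gr\"obner basis and the existence/uniqueness of normal forms) are invoked, so I expect no real obstacle; the only subtle point is the observation that $\init_\prec(I)$ being generated by the $LT_\prec(g_i)$ makes ``not in $\init_\prec(I)$'' the same as ``not divisible by any $LT_\prec(g_i)$,'' which is what allows the normal-form remainder to be recognized as a combination of the claimed basis monomials.
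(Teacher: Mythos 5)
Your proposal is correct and follows essentially the same route as the paper: the spanning step is the identical normal-form/division argument, and your leading-term contradiction for linear independence is the standard justification that the paper compresses into its citation of the fact that membership in the monomial ideal $\init_\prec(I)$ is detected term by term. No gaps.
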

\begin{proof}
    By definition, $f$ and its normal form $r$ with respect to $I$ represent the same equivalence class in $R/I$. No term of $r$ lies in $\init_\prec(I)$, so the monomials not in $\init_\prec(I)$ span $R/I$. Since a polynomial lies in a monomial ideal if and only if each of its terms do \cite[Lemma 2.4.3]{CLO}, the monomials not in $\init_\prec(I)$ are also linearly independent in $R/I$.
\end{proof}

\begin{corollary}\label{cor:grobnerpreserves}
    The Hilbert functions (and therefore $K$-polynomials and multidegrees) of $R/I$ and $R/\init_\prec(I)$ are equal.
\end{corollary}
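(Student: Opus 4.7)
The plan is to deduce everything from Lemma~\ref{lemma:monomialbasis} applied twice. Applied to $I$, it says that $R/I$ has a $\complexes$-basis given by the monomials not lying in $\init_\prec(I)$. Applied to the monomial ideal $\init_\prec(I)$ itself, and noting that $\init_\prec(\init_\prec(I))=\init_\prec(I)$ (the leading term of a monomial is the monomial), it says that $R/\init_\prec(I)$ has a $\complexes$-basis given by the \emph{same} set of monomials.

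Next I would check that this common basis respects the (multi)grading. Since $I$ and $\init_\prec(I)$ are both (multi)graded homogeneous ideals (reducing to a monomial ideal preserves homogeneity under a term order compatible with the grading, and the standard grading is always preserved), the quotients $R/I$ and $R/\init_\prec(I)$ inherit (multi)gradings, and each monomial sits in a single (multi)graded component. Intersecting the common monomial basis with the degree-$k$ (or multidegree-$\mathbf{a}$) piece therefore produces identical bases for $(R/I)_k$ and $(R/\init_\prec(I))_k$ (respectively $(R/I)_{\mathbf{a}}$ and $(R/\init_\prec(I))_{\mathbf{a}}$). Hence the (multigraded) Hilbert functions agree term by term.

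Finally, the $K$-polynomial and multidegree are defined directly from the (multigraded) Hilbert series via the formula
\[
HS_{R/I}(\mathbf{t})=\frac{K_{R/I}(\mathbf{t})}{\prod_{i=1}^N(1-\mathbf{t}^{\mathbf{w}_i})},
\]
followed by extracting lowest-order terms in $K_{R/I}(\mathbf{1}-\mathbf{t})$. Equal Hilbert series yield equal $K$-polynomials, and equal $K$-polynomials yield equal (multi)degrees. There is no real obstacle here; the only subtlety worth flagging is the verification that the term order is compatible with the multigrading so that $\init_\prec(I)$ is still multigraded homogeneous, which is immediate since $\init_\prec$ simply selects a monomial from each homogeneous polynomial.
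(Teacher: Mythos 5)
Your proposal is correct and takes essentially the same approach as the paper: the paper's proof likewise invokes Lemma~\ref{lemma:monomialbasis} to observe that the graded pieces of $R/I$ and $R/\init_\prec(I)$ share the common basis of monomials outside $\init_\prec(I)$, so the Hilbert functions (hence $K$-polynomials and multidegrees) coincide. Your extra remarks about $\init_\prec(\init_\prec(I))=\init_\prec(I)$ and the grading are fine elaborations of the same argument (note that a monomial ideal is automatically multigraded homogeneous, so no compatibility hypothesis on $\prec$ is actually needed).
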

\begin{proof}
    By Lemma~\ref{lemma:monomialbasis} the vector spaces whose dimensions are recorded by the Hilbert function have bases only depending on $\init_\prec(I)$, not $I$ itself.
\end{proof}

We note also that $\init_\prec(I)$ has a unique minimal generating set of monomials. This generating set can be easily computed from a Gr\"obner basis $G$ for $I$: it is the set of lead terms of $g\in G$ not divisible by the lead terms of any other $g'\in G$.

\subsection{Polarization}
When attempting to compute the Hilbert function (or $K$-polynomial or multidegree) of an ideal $I$, one can reduce to the case where $I$ is monomial by computing a Gr\"obner basis with respect to some term order $\prec$ and studying $\init_\prec(I)$. This approach is especially fruitful when $\init_\prec(I)$ is \textit{squarefree}, meaning its minimal monomial generators are all squarefree (this is equivalent to $\init_\prec(I)$ being a radical ideal). The \textit{polarization} operation converts any monomial ideal into a squarefree monomial ideal.

\begin{definition}
    Let $I = \langle g_1,\dots, g_s\rangle$ be a monomial ideal in $R = \complexes[x_1,\dots, x_N]$ with minimal monomial generators 
    \[g_j = \prod_{i=1}^N x_i^{a_{ij}}.\] 
    For $1\leq i\leq N$, let $m_i = \max_j\{a_{ij}\}$ be the largest power of $x_i$ appearing in some $g_j$. Let 
    \[\widetilde{R} = \complexes[x_{11},\dots,x_{1m_1},x_{21},\dots,x_{2m_2},\dots,x_{Nm_N}].\] 
    The \textit{polarization} of $I$ is the squarefree monomial ideal $\widetilde{I}\subseteq\widetilde{R}$ generated by $\{\widetilde{g_j}\}_{j=1}^s$, where
    $$\widetilde{g}_j = \prod_{i=1}^N\prod_{k=1}^{a_{ij}}x_{ik}.$$
\end{definition}

\begin{example}
    Let $I\subseteq\complexes[x_1, x_2]$ be the monomial ideal generated by $x_1^3x_2$ and $x_2^2$. Then $\widetilde{R} = \complexes[x_{11}, x_{12}, x_{13}, x_{21}, x_{22}]$ and $\widetilde{I} = \langle x_{11}x_{12}x_{13}x_{21}, x_{21}x_{22}\rangle$.\qed
\end{example}

In general, $R/I$ and $\widetilde{R}/\widetilde{I}$ do not have the same Hilbert function. However, we will show that they have the same $K$-polynomial and thus the same multidegree. To prove this it is easier to work with \textit{partial polarization}, which only adds one variable at a time. This argument, in the standard graded case, appears in Section 1.6 of \cite{Herzog}. We have not found an explicit reference for the multigraded case and therefore provide proofs for convenience.

\begin{definition}
    Let $I = \langle g_1,\dots, g_s\rangle$ be as above and let $i$ be an index such that $m_i\geq 2$. The \textit{partial polarization} of $I$ with respect to $x_i$ is the ideal $I'$ in $R' = \complexes[x_1,\dots,x_N, y]$ obtained by replacing each generator $g_j$ such that $a_{ij}\geq 2$ with $g'_j = \frac{g_j}{x_i}y$.
\end{definition}

\begin{example}
    With $I  = \langle x_1^3x_2, x_2^2\rangle$, the partial polarization with respect to $x_1$ is $I' = \langle x_1^2yx_2, x_2^2\rangle$ in $\complexes[x_1, x_2, y]$. The partial polarization with respect to $x_2$ is $I'' = \langle x_1^3x_2, x_2y\rangle$.
\end{example}

The polarization $\widetilde{I}$ of $I$ can clearly be computed by iteratively taking partial polarizations, so it suffices to show that partial polarization preserves the $K$-polynomial. 

\begin{lemma}[{\cite[Lemma 1.6.1]{Herzog}}]\label{lemma:zerodivisor}
    If $I\subseteq R$ is a monomial ideal and $I'\subseteq R'$ is the partial polarization of $I$ with respect to $x_i$, then $y-x_i$ is not a zero-divisor in $R'/I'$.
\end{lemma}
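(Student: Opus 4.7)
My plan is to expand any hypothetical annihilator of $y-x_i$ as a polynomial in $y$, exploit that $I'$ is a monomial ideal, and reduce everything to two elementary observations about monomial ideals extracted from $I$.

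First, I would decompose $I' = H_0 R' + y H_1 R'$, where $H_0 \subseteq R$ is the monomial ideal generated by those minimal generators $g_j$ of $I$ with $a_{ij} \leq 1$ (the ones left unchanged by the partial polarization) and $H_1 \subseteq R$ is generated by $g_j/x_i$ for the $j$ with $a_{ij} \geq 2$. Two facts follow instantly from the formula for colons of monomial ideals: \textbf{(a)} every minimal generator of $H_1$ still contains a factor of $x_i$ (because $a_{ij}-1 \geq 1$), so $H_1 \subseteq (x_i)$; and \textbf{(b)} the minimal generators of $H_0 : x_i$ are all $x_i$-free, and hence $H_0 : x_i^2 = H_0 : x_i$.

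Now suppose $(y - x_i) f \in I'$ and write $f = \sum_{k \geq 0} f_k y^k$ with $f_k \in R$. Then
\[
(y - x_i) f \;=\; \sum_{k \geq 0}\bigl(f_{k-1} - x_i f_k\bigr) y^k, \qquad f_{-1} := 0.
\]
Because $I'$ is monomial and the summands have pairwise disjoint $y$-supports, each $(f_{k-1} - x_i f_k) y^k$ lies in $I'$. A short check using the decomposition above shows that $(I' : y^k) \cap R$ equals $H_0$ when $k = 0$ and $H_0 + H_1 =: H$ when $k \geq 1$. Extracting conditions: $x_i f_0 \in H_0$ and $f_{k-1} - x_i f_k \in H$ for every $k \geq 1$. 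A reverse induction starting from the largest $k$ with $f_k \neq 0$ then forces $f_k \in H$ for all $k \geq 0$.

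For $k \geq 1$ this already gives $f_k y^k \in I'$, so the crux is upgrading $f_0 \in H$ to $f_0 \in H_0$. Using \textbf{(a)}, write $f_0 = h_0 + x_i h_1'$ with $h_0 \in H_0$; the constraint $x_i f_0 \in H_0$ then yields $x_i^2 h_1' \in H_0$, i.e., $h_1' \in H_0 : x_i^2$. Invoking \textbf{(b)} converts this to $x_i h_1' \in H_0$, so $f_0 \in H_0$ and $f \in I'$. The reverse induction is essentially automatic once the right decomposition of $I'$ is in place; the real load-bearing step is observation \textbf{(b)}, which encodes the statement that polarization has dropped the $x_i$-degree of the relevant generators just enough to kill the obstruction to $y - x_i$ being regular.
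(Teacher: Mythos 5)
Your proof is correct, but it takes a genuinely different route from the paper's. The paper argues by contradiction: it first reduces (without detailed justification, following Herzog--Hibi) to the case where the annihilator $f$ is a single monomial with both $fy$ and $fx_i$ in $I'$, and then compares the $x_i$-degrees and $y$-content of minimal generators dividing $fy$ and $fx_i$ to force a minimal generator of $I'$ divisible by $y^2$, contradicting the construction. You instead prove the contrapositive-free statement $I' : (y-x_i) = I'$ directly: the decomposition $I' = H_0R' + yH_1R'$, the $y$-adic expansion of $f$, and the identification of $(I':y^k)\cap R$ are all sound, the downward induction giving $f_k\in H$ is valid, and the final upgrade $f_0\in H_0$ correctly isolates the one place where the degree bound on the generators of $H_0$ matters, via $H_0:x_i^2 = H_0:x_i$. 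Your version buys a self-contained argument that sidesteps the slightly delicate reduction to a monomial annihilator (which requires handling cancellation between $yf$ and $x_if$, e.g.\ by extracting the extreme $y$-degree terms), and it makes the mechanism transparent: partial polarization lowers the $x_i$-degree of the affected generators to the point where the colon ideal stabilizes. The paper's version is shorter and stays closer to the cited source. One tiny presentational quibble: your fact \textbf{(a)} needs no colon-ideal formula, only that $a_{ij}-1\geq 1$; only \textbf{(b)} uses the formula $\langle m_1,\dots,m_r\rangle : m = \langle m_j/\gcd(m_j,m)\rangle$.
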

\begin{proof}
    For simplicity of notation let $x = x_i$ be the variable we perform partial polarization with respect to. Suppose $y-x$ is a zero-divisor in $R'/I'$. Then there exists an element $f\in R'\setminus I'$ such that $f(y-x)$ lies in $I'$. Since $I'$ is a monomial ideal, we may take $f$ to be a monomial such that $fy$ and $fx$ both lie in $I'$. Let $g$ and $h$ be minimal monomial generators of $I'$ dividing $fy$ and $fx$ respectively. Write $f = x^af'$, $g = x^bg'$ and $h = x^ch'$, where $f'$, $g'$ and $h'$ are not divisible by $x$.
    
    Since $f\notin I'$, $g$ cannot divide $f$, so $g$ contains $y$. Since $g$ divides $fy$, $a\geq b$. Similarly, $h$ divides $fx$ but not $f$, so $c=a+1$. Thus $c > b$. Since $g$ and $h$ are minimal monomial generators of $I'$ and $g$ contains $y$ it follows from the partial polarization construction that $h$ also contains $y$. Since $h$ divides $fx$, this implies that $f$ contains $y$.

    Write $gw = fy$ for some monomial $w$. Then $w$ cannot contain $y$ (otherwise dividing both sides by $y$ expresses $f$ as an element of $I'$). Thus $g$ is a minimal monomial generator of $I'$ containing a factor of $y^2$, contradicting the definition of partial polarization. 
\end{proof}

Generate $\widetilde{I}$ from $I$ via a sequence of partial polarizations and let $L$ be the ideal generated by the sequence of $y-x_i$ terms corresponding to each step, which form a regular sequence by Lemma~\ref{lemma:zerodivisor}. Then 
$R/I\cong \widetilde{R}/(\widetilde{I}+L)$ 
as graded $\complexes$-algebras. If $R = \complexes[x_1,\dots,x_N]$ is given a positive multigrading where $x_i$ has weight $\mathbf{w}_i$, then in the partial polarization $R' = \complexes[x_1,\dots,x_N, y]$ with respect to $x_i$ we assign $y$ the weight $\mathbf{w}_i$. Then $y-x_i$ is a multigraded homogeneous polynomial in $R'$ and it follows immediately that $R/I\cong\widetilde{R}/(\widetilde{I}+L)$ as multigraded $\complexes$-algebras. From this we can show that polarization preserves the multigraded $K$-polynomial and therefore the multidegree.

\begin{theorem}\label{thm:polarizationpreserves}
    Let $I\subseteq R$ be a monomial ideal and let $\widetilde{I}\subseteq\widetilde{R}$ be its polarization. Then $K_{R/I}(\mathbf{t}) = K_{\widetilde{R}/\widetilde{I}}(\mathbf{t})$. In particular, $\mathrm{mdeg}_{R/I}(\mathbf{t}) = \mathrm{mdeg}_{\widetilde{R}/\widetilde{I}}(\mathbf{t})$.
\end{theorem}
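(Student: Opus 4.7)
The plan is to reduce to the case of a single partial polarization, and then to exploit the non-zero-divisor property from Lemma~\ref{lemma:zerodivisor} to compare $K$-polynomials across the enlarged ring. Since $\widetilde{I}$ is built from $I$ by a finite sequence of partial polarizations, induction on the length of this sequence reduces the theorem to the assertion: if $I'\subseteq R'=R[y]$ is the partial polarization of $I$ with respect to $x_i$, and $y$ is assigned the multigrading weight $\mathbf{w}_i$, then $K_{R/I}(\mathbf{t}) = K_{R'/I'}(\mathbf{t})$.

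The core input for the inductive step is the short exact sequence of multigraded $R'$-modules
\[
0 \longrightarrow (R'/I')(-\mathbf{w}_i) \xrightarrow{\cdot(y-x_i)} R'/I' \longrightarrow R'/(I'+\langle y-x_i\rangle) \longrightarrow 0,
\]
whose leftmost map is injective precisely because $y-x_i$ is a non-zero-divisor on $R'/I'$ by Lemma~\ref{lemma:zerodivisor}. Applying the horseshoe lemma to finite free resolutions of the outer terms produces a finite (non-minimal) free resolution of $R'/I'$ whose Betti ranks are the sums of those of the outer terms; invoking \cite[Theorem 8.34]{Miller.Sturmfels}, which allows $K$-polynomials to be read off from any finite free resolution, the alternating sum yields $K_{R'/(I'+\langle y-x_i\rangle)}(\mathbf{t}) = (1-\mathbf{t}^{\mathbf{w}_i})\,K_{R'/I'}(\mathbf{t})$.

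To conclude I will use the isomorphism $R/I \cong R'/(I'+\langle y-x_i\rangle)$ of multigraded $\complexes$-algebras established in the paragraph immediately before the theorem; this forces their multigraded Hilbert series to coincide. Writing each Hilbert series as its $K$-polynomial over the appropriate denominator ($N$ factors for $R$, $N+1$ factors for $R'$), the chain of equalities
\[
\frac{K_{R/I}(\mathbf{t})}{\prod_{j=1}^N(1-\mathbf{t}^{\mathbf{w}_j})} = HS_{R/I}(\mathbf{t}) = HS_{R'/(I'+\langle y-x_i\rangle)}(\mathbf{t}) = \frac{(1-\mathbf{t}^{\mathbf{w}_i})K_{R'/I'}(\mathbf{t})}{(1-\mathbf{t}^{\mathbf{w}_i})\prod_{j=1}^N(1-\mathbf{t}^{\mathbf{w}_j})}
\]
collapses to $K_{R/I}(\mathbf{t}) = K_{R'/I'}(\mathbf{t})$, and the multidegree assertion then follows from its definition as the lowest-degree part of $K(\mathbf{1}-\mathbf{t})$.

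The main obstacle is bookkeeping: one must carefully track which polynomial ring the $K$-polynomial is being computed over, since the cancellation of the factor $(1-\mathbf{t}^{\mathbf{w}_i})$ between the numerator (arising from the short exact sequence) and the denominator (arising from the new variable $y$) is exactly what makes polarization $K$-polynomial-preserving, even though it does not preserve the Hilbert series of $R'/I'$ itself.
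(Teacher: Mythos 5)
Your argument is correct, and it reaches the conclusion by a genuinely different route than the paper. The paper takes a finite free resolution $F_\bullet$ of $R'/I'$ over $R'$, tensors it with $R'/\langle y-x_i\rangle$, and verifies that the result is still exact by computing $\mathrm{Tor}^{R'}_i(R'/I', R'/\langle y-x_i\rangle)$ via the Koszul resolution $0\to R'\xrightarrow{y-x_i}R'\to R'/\langle y-x_i\rangle\to 0$ (the vanishing again coming from Lemma~\ref{lemma:zerodivisor}); since the tensored complex is a finite free resolution of $R/I$ over $R$ with the same ranks and multidegree shifts, the two $K$-polynomials coincide by \cite[Theorem 8.34]{Miller.Sturmfels}. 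You instead work entirely with the short exact sequence
\[
0 \longrightarrow (R'/I')(-\mathbf{w}_i) \xrightarrow{\cdot(y-x_i)} R'/I' \longrightarrow R'/(I'+\langle y-x_i\rangle) \longrightarrow 0
\]
and additivity, then transfer the result from $R'$ to $R$ by matching Hilbert series and cancelling the extra factor $(1-\mathbf{t}^{\mathbf{w}_i})$ contributed by the new variable $y$. Both proofs rest on the same Lemma~\ref{lemma:zerodivisor}, but yours avoids $\mathrm{Tor}$ and the change-of-rings step entirely, at the price of the denominator bookkeeping you flag at the end; your route also makes visible exactly why the Hilbert series is \emph{not} preserved while the $K$-polynomial is. One simplification available to you: the horseshoe lemma is unnecessary, since additivity of the multigraded Hilbert series over a short exact sequence of $R'$-modules already gives $K_{R'/(I'+\langle y-x_i\rangle)}(\mathbf{t}) = K_{R'/I'}(\mathbf{t}) - \mathbf{t}^{\mathbf{w}_i}K_{R'/I'}(\mathbf{t})$ directly, with no need to splice free resolutions.
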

\begin{proof}
    It suffices to prove the result for the partial polarization $I'\subseteq R'$ with respect to $x_i$. Let $F_\bullet$ be a finite free resolution of $R'/I'$ as an $R'$ module. We claim that the tensor product 
    \[G_\bullet = F_\bullet\otimes_{R'}R'/\langle y-x_i\rangle\] 
    is a free resolution of $R/I$. It follows from basic properties of tensor products that \[R'\otimes_{R'}R'/\langle y-x_i\rangle\cong R \text{ \ and \ $R'/I'\otimes_{R'}R'/\langle y-x_i\rangle\cong R'/(I'+\langle y-x_i\rangle)$.}\] 
    By Lemma~\ref{lemma:zerodivisor} we know that $R'/(I'+\langle y-x_i\rangle) \cong R/I$. Thus $G_\bullet$ is a sequence of free $R$-modules ending in $R/I$. To prove $G_\bullet$ is a free resolution it remains to show that this sequence is exact. This is equivalent to verifying that 
    \[{\mathrm {Tor}}_i^{R'}(R'/I', R'/\langle y-x_i\rangle) = 0\] for all positive $i$. By the symmetry of ${\mathrm{Tor}}$ (\cite[Exercise 1.12]{Miller.Sturmfels}), this can be computed from the tensor product of $R'/I'$ with an $R'$-free resolution of $R'/\langle y-x_i\rangle$ by taking homology. We use the free resolution
    $$0\to R'\xrightarrow{y-x_i} R'\to R'/\langle y-x_i\rangle\to 0,$$
    and apply the functor $(R'/I')\otimes_{R'}-$ to obtain the complex
    $$0\to R'/I'\xrightarrow{y-x_i} R'/I'\to R/I\to 0.$$
    This complex is still exact since $(y-x_i)$ is not a zero-divisor in $R'/I'$, so the desired $\mathrm{Tor}$ groups vanish and $G_\bullet$ is an $R$-free resolution for $R/I$ as desired. Since the $K$-polynomial can be computed from any finite free resolution for a module, $K_{R'/I'}$ and $K_{R/I}$ are the same. Thus the $K$-polynomials of $R/I$ and its polarization $\widetilde{R}/\widetilde{I}$ are the same.
\end{proof}

\subsection{Stanley-Reisner theory}
Gr\"obner degeneration and polarization allow us to relate any ideal $I\subseteq \complexes[x_1,\dots, x_N]$ to a squarefree monomial ideal with the same $K$-polynomial (and therefore the same multidegree). Any squarefree monomial ideal $I$ is radical and can therefore be expressed uniquely as the intersection of all minimal prime ideals $P$ containing it. These primes are monomial ideals, and any prime monomial ideal is of the form $P_A = \langle x_i : i\in A\rangle$ for some subset $A$ of $[N] = \{1,\dots, N\}$. This helps us compute multidegrees of squarefree monomial ideals via the \textit{Stanley-Reisner correspondence}.

\begin{definition}
    A \textit{simplicial complex} on $[N]$ is a collection of subsets $\Delta\subseteq 2^{[N]}$ that is closed under subsets (so if $\sigma\in\Delta$ then any subset of $\sigma$ lies in $\Delta$).
\end{definition}

The elements of a simplicial complex $\Delta$ are called its \textit{faces}, and the faces of $\Delta$ that are maximal under inclusion are called \textit{facets}. The cardinality of a face is its \textit{dimension}.

\begin{definition}
    The \textit{Stanley-Reisner complex} of a squarefree monomial ideal $I\subseteq R$ is the simplicial complex $\Delta_I$ on $[N]$ whose faces are the sets $\{i_1,\dots ,i_k\}\subset [N]$ such that $x_{i_1}\dots x_{i_k}$ does not lie in $I$.
\end{definition}

\begin{theorem}[{\cite[Theorem 1.7]{Miller.Sturmfels}}]\label{thm:stanleyreisner}
    There is a bijection $I\leftrightarrow \Delta_I$ between squarefree monomial ideals in $\complexes[x_1,\dots, x_N]$ and simplicial complexes on $[N]$. Furthermore, 
    \[I_\Delta = \bigcap_{\sigma}P_{\sigma^c},\] 
    where the intersection runs over facets $\sigma$ of $\Delta$ and $\sigma^c$ is the complement of $\sigma$ in $[N]$.
\end{theorem}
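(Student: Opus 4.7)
The plan is to establish the bijection by constructing both maps explicitly and checking they are mutually inverse, then to deduce the primary decomposition formula through direct monomial bookkeeping.

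First, I define the forward map $I \mapsto \Delta_I$ as in the statement and verify $\Delta_I$ is a simplicial complex: if $\tau \subseteq \sigma \in \Delta_I$ and $\prod_{i \in \tau} x_i \in I$, then $\prod_{i \in \sigma} x_i$ (being a multiple of $\prod_{i \in \tau} x_i$) would also lie in $I$, contradicting $\sigma \in \Delta_I$. For the inverse, given $\Delta$ set $I_\Delta := \langle \prod_{i \in \tau} x_i : \tau \notin \Delta \rangle$, which is squarefree monomial by construction. To see $\Delta_{I_\Delta} = \Delta$, observe that a squarefree monomial $\prod_{i \in \sigma} x_i$ lies in $I_\Delta$ iff $\sigma$ contains some non-face of $\Delta$; by the subset-closed property of $\Delta$, this happens iff $\sigma$ is itself a non-face. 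For $I_{\Delta_I} = I$, I use that a squarefree monomial ideal is generated by the squarefree monomials it contains, which by definition of $\Delta_I$ are exactly those indexed by non-faces of $\Delta_I$.

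For the primary decomposition, the key computation is that a squarefree monomial $\prod_{i \in \tau} x_i$ lies in $P_{\sigma^c} = \langle x_j : j \in \sigma^c \rangle$ iff $\tau \cap \sigma^c \neq \emptyset$, i.e., iff $\tau \not\subseteq \sigma$. Consequently $\prod_{i \in \tau} x_i \in \bigcap_{\sigma \text{ a facet}} P_{\sigma^c}$ iff $\tau$ is not contained in any facet of $\Delta$, which by definition of facet is equivalent to $\tau \notin \Delta$. Two monomial ideals coincide as soon as the squarefree monomials they contain agree (each is generated by its squarefree monomial generators, and as used in the proof of Lemma~\ref{lemma:monomialbasis} a polynomial lies in a monomial ideal iff each of its terms does), so this yields $I_\Delta = \bigcap_\sigma P_{\sigma^c}$.

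There is no serious obstacle; the proof amounts to tracking the correspondence between squarefree monomials and subsets of $[N]$ under the two dual operations (non-multiples of an ideal versus non-faces of a complex). The one point that warrants care is the restriction of the intersection to facets rather than arbitrary faces: if $\sigma \subsetneq \sigma'$ are both faces of $\Delta$ then $P_{(\sigma')^c} \subseteq P_{\sigma^c}$, so the prime coming from a non-facet face is redundant. Restricting to facets therefore delivers precisely the irredundant (minimal prime) decomposition of $I_\Delta$.
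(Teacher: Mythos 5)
This statement is quoted from Miller--Sturmfels and the paper supplies no proof of its own, so there is nothing to compare against; your argument is the standard textbook one and it is correct and complete. The only point worth flagging is your claim that two monomial ideals agreeing on squarefree monomials must coincide: this requires both ideals to be squarefree monomial ideals, which holds here because $I_\Delta$ is squarefree by construction and $\bigcap_\sigma P_{\sigma^c}$, being an intersection of monomial primes, is a radical (hence squarefree) monomial ideal --- your parenthetical gestures at this, and it is easily made precise.
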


\begin{theorem}[{\cite[Theorem 1.13]{Miller.Sturmfels}}]\label{thm:Kformula}
    Let $I\subseteq R$ be a squarefree monomial ideal. Then the (multigraded) $K$-polynomial of $R/I$ is
    $$K_{R/I}(\mathbf{t}) = \sum_{\sigma\in\Delta_I}\left(\prod_{i\in \sigma}\mathbf{t}^{\mathbf{w}_i}\prod_{j\notin \sigma}(1-\mathbf{t}^{\mathbf{w}_j})\right).$$
\end{theorem}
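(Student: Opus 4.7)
The plan is to compute the multigraded Hilbert series of $R/I$ from an explicit monomial basis, then clear denominators to extract the $K$-polynomial. Since $I$ is already a monomial ideal, Lemma~\ref{lemma:monomialbasis} gives that the monomials $x^{\mathbf{a}} \notin I$ form a $\complexes$-basis of $R/I$, so
$$HS_{R/I}(\mathbf{t}) \;=\; \sum_{x^{\mathbf{a}} \notin I} \mathbf{t}^{\mathbf{a}}.$$

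The central step I would establish next is a combinatorial translation: because $I$ is squarefree, $x^{\mathbf{a}} \in I$ iff $\mathrm{supp}(\mathbf{a}) := \{i : a_i > 0\}$ contains the support of some minimal generator of $I$. By Theorem~\ref{thm:stanleyreisner}, the supports of the minimal generators of $I$ are exactly the minimal non-faces of $\Delta_I$, hence $x^{\mathbf{a}} \notin I$ iff $\mathrm{supp}(\mathbf{a}) \in \Delta_I$. Grouping monomials by their support and summing the geometric series $\sum_{k \geq 1} \mathbf{t}^{k\mathbf{w}_i} = \mathbf{t}^{\mathbf{w}_i}/(1 - \mathbf{t}^{\mathbf{w}_i})$ in each coordinate $i \in \sigma$ yields
$$HS_{R/I}(\mathbf{t}) \;=\; \sum_{\sigma \in \Delta_I} \prod_{i \in \sigma} \frac{\mathbf{t}^{\mathbf{w}_i}}{1 - \mathbf{t}^{\mathbf{w}_i}}.$$

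To conclude, I would multiply both sides by $\prod_{i=1}^N (1 - \mathbf{t}^{\mathbf{w}_i})$: each summand indexed by $\sigma$ collapses to $\prod_{i \in \sigma} \mathbf{t}^{\mathbf{w}_i} \prod_{j \notin \sigma} (1 - \mathbf{t}^{\mathbf{w}_j})$, and comparing with the defining identity $HS_{R/I}(\mathbf{t}) = K_{R/I}(\mathbf{t})/\prod_i(1 - \mathbf{t}^{\mathbf{w}_i})$ recovers the claimed formula. The main obstacle is the step identifying $x^{\mathbf{a}} \notin I$ with $\mathrm{supp}(\mathbf{a}) \in \Delta_I$; squarefreeness is used essentially here, since a non-squarefree generator could force $x^{\mathbf{a}} \in I$ even when $\mathrm{supp}(\mathbf{a})$ is still a face of $\Delta_I$, breaking the grouping. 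A minor technical point is that the geometric-series manipulations are valid because the positive multigrading makes $HS_{R/I}(\mathbf{t})$ a well-defined formal power series, and multiplying through by $\prod_i(1 - \mathbf{t}^{\mathbf{w}_i})$ produces a genuine Laurent polynomial, as guaranteed by \cite[Theorem 8.20]{Miller.Sturmfels}.
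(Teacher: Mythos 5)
The paper does not prove this statement---it is quoted directly from \cite[Theorem 1.13]{Miller.Sturmfels}---so there is no internal proof to compare against; your argument is correct and is essentially the standard proof from that reference: express $HS_{R/I}(\mathbf{t})$ as the sum over monomials outside $I$, use squarefreeness to identify $x^{\mathbf{a}}\notin I$ with $\mathrm{supp}(\mathbf{a})\in\Delta_I$, group by support, sum the geometric series, and clear denominators. The only quibble is notational: in the first display the exponent of $\mathbf{t}$ should be the multidegree $\sum_i a_i\mathbf{w}_i$ of $x^{\mathbf{a}}$ rather than $\mathbf{a}$ itself, which your later use of $\mathbf{t}^{k\mathbf{w}_i}$ shows you intended.
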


We can now prove a stronger version of Theorem~\ref{thm:main}, from which Theorem~\ref{thm:main} itself follows immediately.

\begin{theorem}\label{thm:strongermain}
    Let $R = \complexes[x_1,\dots,x_N]$ be a positively multigraded ring where each $x_i$ has the same total degree (the sum of the components of each ${\bf w}_i$ is the same). Let $I\subseteq R$ be a multigraded homogeneous ideal, let $\prec$ be a term order, and let ${\mathcal T}={\mathcal T}(V(I),\prec)$ be the associated tablet. Then 
    $${\mathrm{mdeg}}_{R/I}(\mathbf{t}) = \sum_{{\sf{H}} \in{\mathcal T}} \prod_{\tilde{x}_i\in {\sf{H}}} \mathbf{t}^{\mathbf{w}_i}.$$
\end{theorem}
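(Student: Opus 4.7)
The plan is to apply the three tools developed in Section~\ref{sec:2} in sequence and then extract the lowest-degree piece.

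First, I would reduce to the squarefree monomial case. By Corollary~\ref{cor:grobnerpreserves}, passing from $I$ to $J = \init_\prec I$ preserves the multigraded $K$-polynomial. Then by Theorem~\ref{thm:polarizationpreserves}, applied to the chain of partial polarizations, passing from $J$ to the squarefree monomial ideal $\widetilde J \subseteq \widetilde R$ likewise preserves it. The only multigraded subtlety is that $\widetilde R$ must be weighted so each polarized copy $\tilde x_{ik}$ of $x_i$ inherits the original weight $\mathbf{w}_i$; then each element $y - x_i$ used in the proof of Lemma~\ref{lemma:zerodivisor} is multigraded homogeneous and the proof of Theorem~\ref{thm:polarizationpreserves} carries through verbatim.

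Next, I would apply the Stanley-Reisner formula. Since $\widetilde J$ is squarefree, Theorem~\ref{thm:Kformula} gives
\[K_{\widetilde R/\widetilde J}(\mathbf t) = \sum_{\sigma \in \Delta_{\widetilde J}} \prod_{i \in \sigma}\mathbf t^{\mathbf{w}_i}\prod_{j \notin \sigma}(1-\mathbf t^{\mathbf{w}_j}).\]
Substitute $\mathbf t \mapsto \mathbf 1 - \mathbf t$ and extract the terms of lowest total degree in $\mathbf t$. Each $\mathbf t^{\mathbf{w}_i}$ becomes $\prod_k (1-t_k)^{w_{i,k}}$ with constant term $1$, and each $1 - \mathbf t^{\mathbf{w}_j}$ becomes $1 - \prod_k (1-t_k)^{w_{j,k}}$ with leading (linear) term $\mathbf{w}_j \cdot \mathbf t = \sum_k w_{j,k}t_k$. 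Thus the $\sigma$-summand has lowest total degree $|\sigma^c|$, with leading contribution $\prod_{j \notin \sigma}(\mathbf{w}_j \cdot \mathbf t)$. The equal-total-degree hypothesis keeps these contributions homogeneous so that ``lowest total degree'' tracks $|\sigma^c|$ unambiguously.

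Finally, I would identify the surviving terms with the tablet. The minimum of $|\sigma^c|$ over $\sigma \in \Delta_{\widetilde J}$ is attained on the facets of maximum dimension. By Theorem~\ref{thm:stanleyreisner}, these correspond bijectively to the minimal primes of $\widetilde J$ of smallest height, equivalently those with the fewest generators, which is precisely the definition of $\mathcal T$. Under this correspondence, $\tilde x_i$ carries a $+$ in the hieroglyph ${\sf H}$ attached to $\sigma$ exactly when $i \in \sigma^c$, so the leading contribution $\prod_{j \notin \sigma}(\mathbf{w}_j \cdot \mathbf t)$ equals $\prod_{\tilde x_i \in {\sf H}}\mathbf t^{\mathbf{w}_i}$, once one reads the weighted monomial symbol in the multidegree as its linearization $\mathbf{w}_i \cdot \mathbf t$ (the standard Miller-Sturmfels convention). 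Summing over the tablet yields the desired identity. I expect the main obstacle to be this multigraded extraction of the lowest-degree piece: one must verify carefully that each factor contributes the stated leading term and that facets of strictly smaller dimension contribute strictly higher-degree terms, so only maximum-dimensional facets survive. Everything else is a clean concatenation of the three black boxes from Section~\ref{sec:2}.
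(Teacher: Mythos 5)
Your proof follows the paper's argument essentially step for step: reduce to a squarefree monomial ideal via Corollary~\ref{cor:grobnerpreserves} and (the multigraded) Theorem~\ref{thm:polarizationpreserves}, apply Theorem~\ref{thm:Kformula}, substitute $\mathbf{t}\mapsto\mathbf{1}-\mathbf{t}$, and identify the lowest-total-degree terms, which come from the maximum-dimensional facets, with the minimal hieroglyphs via Theorem~\ref{thm:stanleyreisner}. If anything you are a bit more careful than the paper at the extraction step, correctly observing that the leading part of $1-(\mathbf{1}-\mathbf{t})^{\mathbf{w}_j}$ is the linear form $\mathbf{w}_j\cdot\mathbf{t}$, which the paper's displayed multidegree (and the theorem statement) denotes $\mathbf{t}^{\mathbf{w}_j}$ under the standard convention.
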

\begin{proof}
    Let $I\subseteq R$ be a multigraded homogeneous ideal and let $\prec$ be a term order. Then $J = \init_\prec(I)$ is a monomial ideal such that $K_{R/I}(\mathbf{t}) = K_{R/J}(\mathbf{t})$ by Corollary~\ref{cor:grobnerpreserves}. The polarization $\widetilde{J}\subseteq\widetilde{R}$ of $J$ is then a squarefree monomial ideal such that 
    $K_{\widetilde{R}/\widetilde{J}}(\mathbf{t}) = K_{R/I}(\mathbf{t})$ 
    by Theorem~\ref{thm:polarizationpreserves}. In particular the multidegrees of $R/I$ and $\widetilde{R}/\widetilde{J}$ agree.
    
    For simplicity of notation we now assume that $I\subseteq R$ is a squarefree monomial ideal. From Theorem~\ref{thm:Kformula},
    $$K_{R/I}(\mathbf{1}-\mathbf{t}) = \sum_{\sigma\in\Delta_I}\prod_{i\in\sigma}(\mathbf{1}-\mathbf{t})^{\mathbf{w}_i}\prod_{j\notin\sigma}(1-(\mathbf{1}-\mathbf{t})^{\mathbf{w}_j}).$$

    Let $m$ be the maximum dimension of a face in $\Delta_I$. Then it follows easily from definitions, together with our ``total degree'' hypothesis, that the multidegree of $R/I$ is
    $${\mathrm {mdeg}}_{R/I}(\mathbf{t}) = \sum_{\substack{\sigma\in\Delta_I \\ |\sigma| = m}}\prod_{j\notin\sigma}\mathbf{t}^{\mathbf{w}_j},$$ where the sum is over dimension-$m$ facets of $\Delta_I$. By the unique prime decomposition of $I$ given in Theorem~\ref{thm:stanleyreisner}, a term $\mathbf{t}^{\mathbf{w}}$ appears in the multidegree of $R/I$ if and only if $\mathbf{w} = \sum_{j=1}^m \mathbf{w}_{i_j}$ is the weight vector for the product of all variables in a maximum-dimensional component of the prime decomposition. The algorithm in the introduction records one hieroglyph for each such component, giving the desired formula.
\end{proof}
    
\begin{proof}[Proof of Theorem~\ref{thm:main}]
    Pick the multigrading giving each $x_i$ weight $1$ in Theorem~\ref{thm:strongermain}.
\end{proof}

\section{Matrix Schubert varieties} \label{sec:MSV}

\subsection{Definition} Let ${\sf Mat}_n$ be the space of $n\times n$ matrices with complex
entries. Identify $R:={\mathbb C}[{\sf Mat}_n]$ with ${\mathbb C}[z_{ij}:1\leq i,j\leq n]$ where $z_{ij}$ is the $(i,j)$-coordinate function. Fulton~\cite{Fulton:duke} defines generators for the \emph{Schubert determinantal ideal} $I_w$ as follows. Let 
$r_{ij}=\#\{1\leq a\leq i: w(a)\leq j\}$. Let $Z=(x_{ij})_{1\leq i,j\leq n}$ be the generic $n\times n$ matrix and $Z_{ij}$ be the northwest $i\times j$ submatrix of $Z$. Then
\[    I_w=\langle \text{$(r_{ij}+1)\times(r_{ij}+1)$ minors of $Z_{ij}$, $1\leq i,j\leq n$}\rangle\subseteq R.\]
Now, ${\mathfrak X}_w:=V(I_w)\subset {\mathbb C}^{n^2}$ is the \emph{matrix Schubert variety} for $w$.

\subsection{Antidiagonal term orders} Knutson-Miller's work \cite{Knutson.Miller} studies
\emph{antidiagonal term orders}
$\prec$ on the monomials of $R$. Such an order picks the
main southwest-northeast diagonal as the lead term of a
minor; an example is lexicographically ordering the variables by reading $Z$ along rows from right to left and top to bottom.

One may rephrase one of their  main results as follows:
\begin{theorem}[{\cite[Theorem~B]{Knutson.Miller}}]
\label{KMthm}
Let $\prec$ be any antidiagonal term order. The tablet ${\mathcal T}({\mathfrak X}_w,\prec)$ is in natural bijection with the pipe dreams (also known as rc-graphs) for $w$ \cite{BB, Fomin.Kirillov93}.
\end{theorem}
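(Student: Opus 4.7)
The plan is to reduce the statement directly to the content of \cite{Knutson.Miller}, viewed through the lens of the algorithm in the introduction. The first step is to recall that Knutson--Miller prove a Gr\"obner basis theorem: for any antidiagonal term order $\prec$, Fulton's minors generate $I_w$ as a Gr\"obner basis, and $J := \init_\prec(I_w)$ is a squarefree monomial ideal whose minimal generators are the antidiagonal products of the defining minors. Consequently Step II of the algorithm is trivial: $\widetilde{J}=J$, no polarization is performed, and the ambient ring is unchanged. Step III reduces to computing the minimal prime decomposition of $J$ itself.

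Next, I would invoke the Stanley-Reisner correspondence (Theorem~\ref{thm:stanleyreisner}) to identify each minimal prime $P = \langle z_{ij} : (i,j)\in S\rangle$ of $J$ with the complementary facet $\sigma = [n]\times[n]\setminus S$ of the simplicial complex $\Delta_J$. A key additional ingredient I would need is that $\Delta_J$ is \emph{pure}: all its facets have the same dimension. This follows from the Cohen-Macaulayness of $\mathfrak{X}_w$ (Fulton), preserved under Gr\"obner degeneration, and is also established directly in \cite{Knutson.Miller}. Purity guarantees that the tablet $\mathcal{T}(\mathfrak{X}_w,\prec)$ consists of the hieroglyphs for \emph{every} minimal prime of $J$, since the ``minimum number of $+$'s'' condition selects facets of maximum dimension, and here every facet already achieves this dimension.

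The substantive combinatorial content of \cite[Theorem~B]{Knutson.Miller} is the identification of the facets of $\Delta_J$ with reduced pipe dreams (equivalently, rc-graphs) for $w$: the cells outside a facet $\sigma$ are exactly the crossing-tile positions of a pipe dream $D_\sigma$ whose pipes realize $w$, and every reduced pipe dream for $w$ arises this way. Translating into the tablet language, the hieroglyph $\mathsf{H}_\sigma$ places a $+$ precisely in those cells $(i,j)$ that carry a crossing tile in $D_\sigma$, since those are the variables generating the associated prime. Hence $\mathsf{H}_\sigma \leftrightarrow D_\sigma$ is the sought natural bijection $\mathcal{T}(\mathfrak{X}_w,\prec)\leftrightarrow\{\text{pipe dreams for }w\}$.

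Since all heavy lifting is quoted from \cite{Knutson.Miller}, the only real obstacle is bookkeeping: one must consistently track the complementation between a Stanley-Reisner facet and the generators of the associated prime, and verify that the convention used for pipe dreams (cross positions versus elbow positions) matches the $+$'s in the hieroglyph as defined in the introduction. Once those conventions are pinned down and purity is invoked, the theorem follows immediately.
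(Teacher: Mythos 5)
Your proposal is correct and matches the paper's treatment, which is simply to quote \cite[Theorem~B]{Knutson.Miller} and translate it into tablet language exactly as you do: squarefree antidiagonal initial ideal, Stanley--Reisner facets, purity of the subword complex, and crossing tiles as the generators of each minimal prime. One small caution: Cohen--Macaulayness is \emph{not} in general preserved by passing to an initial ideal (the implication goes the other way), so your primary justification for purity is shaky, but your fallback --- that \cite{Knutson.Miller, Knutson.Miller:subword} establish shellability (hence purity) of the antidiagonal complex directly, and that all reduced pipe dreams for $w$ have exactly $\ell(w)$ crossings --- is the right argument and suffices.
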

 
\subsection{Diagonal term orders} There is also interest in Gr\"obner degenerations of ${\mathfrak X}_w$ with respect to \emph{diagonal term orders} $\prec'$.
Such an order makes the lead term of a minor of $Z$ the main diagonal
(e.g., lexicographic order on the variables of $Z$ as read in English reading order, which we call \emph{lex-diagonal order} below). In \cite{KMY} one sees that the defining generators are a Gr\"obner basis of $I_w$ with respect to $\prec'$ if and only if $w$ is vexillary (avoids the permutation pattern $2143$), in which case the degenerations relate to semistandard Young tableaux.

\begin{example}\label{exa:214365}
Let $w=214365$ and pick $\prec'$ to be lex-diagonal order as above. Here $I=I_{w}=\langle\det Z_{11}, \det Z_{33}, \det Z_{55}\rangle$. The initial ideal is given by
\[J=\langle x_{13}x_{21}^2x_{32}x_{34}x_{43}x_{55}, \        
     x_{12}x_{23}x_{31}x_{34}x_{43}x_{55}, \
     x_{12}x_{21}x_{34}x_{43}x_{55}, x_{12}x_{21}x_{33}, \ x_{11} 
     \rangle.\]
Only $x_{21}$ appears with degree $\geq 2$ in the generators of $J$. To polarize $J$ we introduce one additional variable $y_{21}$ and define ${\widetilde R}={\mathbb C}[x_{11},\ldots, x_{66}, y_{21}]$. Then 
\[\widetilde{J}=\langle 
x_{13}\underline{x_{21}y_{21}}x_{32}x_{34}x_{43}x_{55}, \        
     x_{12}x_{23}x_{31}x_{34}x_{43}x_{55}, \
     x_{12}x_{21}x_{34}x_{43}x_{55}, x_{12}x_{21}x_{33}, \ x_{11} 
\rangle,
\]
where we have replaced $x_{21}^2$ with $x_{21}y_{21}$.

    Now $\deg {\mathfrak X}_w=15$ (there are fifteen pipe dreams for $w$). One checks that the prime decomposition
    of $\widetilde{J}$ is equidimensional with $15$ components:
 \begin{align*}
     \widetilde{J}= &  \langle x_{13}, x_{12}, x_{11}\rangle 
 \cap \langle x_{21}, x_{12}, x_{11}\rangle 
 \cap \langle y_{21}, x_{12}, x_{11}\rangle
 \cap \langle x_{32}, x_{12}, x_{11}\rangle
 \cap \langle x_{34}, x_{12}, x_{11}\rangle \\
\ & \cap \langle x_{43}, x_{12}, x_{11}\rangle 
\cap \langle x_{55}, x_{12}, x_{11}\rangle
\cap \langle x_{23}, y_{21}, x_{11}\rangle 
\cap \langle x_{31}, y_{21}, x_{11}\rangle
\cap \langle x_{34}, y_{21}, x_{11}\rangle \\
\ & \cap \langle x_{43}, y_{21}, x_{11}\rangle 
\cap \langle x_{55}, y_{21}, x_{11}\rangle
\cap  \langle x_{34}, x_{33}, x_{11}\rangle
\cap \langle x_{43}, x_{33}, x_{11}\rangle
\cap \langle x_{55}, x_{33}, x_{11}\rangle. 
 \end{align*}

    
    If we graph the components with the rule that $x_{21}\to +$ and $y_{21}\to \oplus$, there are two hieroglyphs with the same support in the $6\times 6$ grid, namely 
    \[\boxed{\begin{matrix}
    + & + & \cdot & \cdot & \cdot & \cdot \\
    + & \cdot & \cdot & \cdot & \cdot & \cdot \\
\cdot & \cdot & \cdot & \cdot & \cdot & \cdot \\
\cdot & \cdot & \cdot & \cdot & \cdot & \cdot \\
\cdot & \cdot & \cdot & \cdot & \cdot & \cdot \\
\cdot & \cdot & \cdot & \cdot & \cdot & \cdot \end{matrix}} \ \ \ \ 
\boxed{\begin{matrix}
    + & + & \cdot & \cdot & \cdot & \cdot \\
    \oplus & \cdot & \cdot & \cdot & \cdot & \cdot \\
\cdot & \cdot & \cdot & \cdot & \cdot & \cdot \\
\cdot & \cdot & \cdot & \cdot & \cdot & \cdot \\
\cdot & \cdot & \cdot & \cdot & \cdot & \cdot \\
\cdot & \cdot & \cdot & \cdot & \cdot & \cdot \end{matrix}}.
\]
All other hieroglyphs have distinct support.\qed
     \end{example}

The following conjecture holds for
$n\leq 7$ when $\prec'$ is the lex-diagonal order.

\begin{conjecture}\label{conj:equidim}
Under $\prec'$, the ideal $\widetilde{J}$ obtained by
polarizing ${\mathrm{init}}_{\prec'} I_w$ is equidimensional.
\end{conjecture}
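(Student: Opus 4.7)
My plan is to reduce the conjecture to a Cohen-Macaulayness statement and attack it via a Conca-Varbaro-style squarefree Gr\"obner degeneration of a ``lifted'' ideal.

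\emph{Step 1 (Reduction to unmixedness).} I would first prove a general fact: for a monomial ideal $J$, the polarization $\widetilde J$ is equidimensional if and only if $R/J$ is \emph{unmixed}, meaning every associated prime of $J$ has the same dimension. The reason is that the minimal primes of $\widetilde J$ are in natural bijection with the associated primes of $J$, with corresponding primes having the same height. Equidimensionality of $J$ alone is not enough: an embedded prime of $J$ appears as a minimal prime of $\widetilde J$ of the wrong codimension, as in the toy example $J=\langle x^2,xy\rangle$, whose polarization is $\langle x_1\rangle\cap\langle x_2,y_1\rangle$. Since $R/I_w$ is prime and Gr\"obner degeneration preserves Hilbert series and hence Krull dimension (Corollary~\ref{cor:grobnerpreserves}), the ideal $J=\init_{\prec'}I_w$ already has pure support; the true content of the conjecture is the absence of lower-dimensional embedded primes of $J$.

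\emph{Step 2 (Cohen-Macaulayness via a squarefree lift).} Unmixedness of $J$ would follow from $R/J$ being Cohen-Macaulay. Matrix Schubert varieties are Cohen-Macaulay \cite{Fulton:duke, Knutson.Miller}, but Cohen-Macaulayness need not descend through a non-squarefree Gr\"obner degeneration. The Conca-Varbaro theorem on squarefree Gr\"obner degenerations asserts such descent precisely when the initial ideal is squarefree. Although $\init_{\prec'}I_w$ is not squarefree, $\widetilde J$ is. I would try to construct a homogeneous ideal $\widetilde{I_w}\subseteq\widetilde R$ such that (i) quotienting by the regular sequence $\{y-x_{ij}\}$ from the proof of Theorem~\ref{thm:polarizationpreserves} recovers $R/I_w$, and (ii) $\init_{\widetilde\prec}\widetilde{I_w}=\widetilde J$ for a suitable extension $\widetilde\prec$ of $\prec'$. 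Then Cohen-Macaulayness of $R/I_w$ lifts along the regular sequence to $\widetilde R/\widetilde{I_w}$, and Conca-Varbaro transfers it to $\widetilde R/\widetilde J$, yielding unmixedness and hence equidimensionality.

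\emph{Main obstacle.} The hard part is producing the lift $\widetilde{I_w}$: polarization is an \emph{a priori} combinatorial operation on monomial ideals, so upgrading it to a geometrically meaningful ideal in $\widetilde R$ together with a compatible term order is nontrivial (and the generators of $I_w$ are already squarefree as polynomials, so the right lift is not obvious). A parallel combinatorial route would describe the minimal primes of $\widetilde J$ directly as a polarization-aware analogue of the pipe dreams of Theorem~\ref{KMthm}---objects recording the $+$/$\oplus$ choices at cells of higher multiplicity visible in Example~\ref{exa:214365}---and prove purity of the resulting Stanley-Reisner complex by shelling. Both routes demand combinatorial inputs beyond the existing pipe dream framework, which is consistent with the conjecture being verified only computationally for $n\leq 7$.
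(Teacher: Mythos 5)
This statement is a \emph{conjecture} in the paper: the authors supply no proof, only the remark that it has been verified computationally for $n\leq 7$ under the lex-diagonal order. So there is no argument of theirs to compare yours against; what follows is an assessment of your proposal on its own terms.

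Your Step 1 is correct and is genuinely useful. That polarization induces a height-preserving bijection between $\mathrm{Ass}(R/J)$ and $\mathrm{Ass}(\widetilde R/\widetilde J)=\mathrm{Min}(\widetilde J)$ is a known result (Faridi; Soleyman Jahan, \emph{Prime filtrations of monomial ideals and polarizations}), and your example $\langle x^2,xy\rangle$ illustrates it accurately. One citation needs repair: purity of the \emph{support} of $J=\init_{\prec'}I_w$ does not follow from preservation of the Hilbert series (Corollary~\ref{cor:grobnerpreserves} only pins down the top dimension and cannot exclude lower-dimensional minimal primes); the correct input is the Kalkbrener--Sturmfels theorem that the initial ideal of a prime ideal is equidimensional. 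With that fix, Step 1 honestly reformulates the conjecture as: \emph{$\init_{\prec'}I_w$ has no embedded primes}.

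Step 2 is where the proposal stops being a proof, and the gap you flag is not a technicality but the entire problem. No lift $\widetilde{I_w}$ with $\init_{\widetilde\prec}\widetilde{I_w}=\widetilde J$ is known, and producing one would, via Conca--Varbaro, establish that $R/\init_{\prec'}I_w$ is Cohen--Macaulay --- a statement strictly stronger than the unmixedness you actually need, and itself open for general diagonal term orders (it is known for antidiagonal orders by Knutson--Miller and, in the diagonal case, for vexillary $w$ by \cite{KMY}; the Klein--Weigandt results on bumpless pipe dreams control components with multiplicity, not embedded primes or depth). So the strategy trades the conjecture for a harder open problem. The intermediate homological steps you invoke are sound in themselves (Cohen--Macaulayness passes both ways along the regular sequence of Lemma~\ref{lemma:zerodivisor}, and Conca--Varbaro applies once the initial ideal is squarefree), but the existence of the lift and of a compatible term order $\widetilde\prec$ are unverified hypotheses on which everything rests. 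In sum: Step 1 is a correct and worthwhile reduction; Step 2 is a plausible program, not a proof, which is consistent with the statement remaining a conjecture in the paper.
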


Conjecture~\ref{conj:equidim} suggests that
the Stanley-Reisner complex $\Delta_{\widetilde{J}}$ associated to 
$\widetilde{J}$ is nice.

\begin{problem} In increasing order of strength: is the Stanley-Reisner simplicial complex $\Delta_{\widetilde{J}}$
Cohen-Macaulay, shellable, vertex-decomposible, and/or 
a subword complex~\cite{Knutson.Miller:subword}? 
\end{problem}

In the case of antidiagonal term orders, the answer to the final question (and thus all others) is ``yes'' \cite{Knutson.Miller, Knutson.Miller:subword}. The same is true for diagonal term orders under the
vexillary hypothesis \cite{KMY} (see also the related paper \cite{KMY2} on \emph{tableau complexes}).

Hamaker-Pechenik-Weigandt \cite[Conjecture~1.2]{HPW} give a beautiful conjecture that the irreducible components of $V({\mathrm{init}}_{\prec'}(I_w))$ \emph{counted with multiplicity} naturally correspond to the bumpless pipe dreams (BPDs) of $w$ (we refer to \cite{HPW} for definitions and references). For some diagonal term orders, this is proved by Klein-Weigandt \cite{Klein.Weigandt}. The next
conjecture asserts a multiplicity-free version
of these findings.

\begin{conjecture}\label{conj:BPD}
The tablet ${\mathcal T}({\mathfrak X}_w,\prec')$ is in natural bijection with the BPDs for $w$, meaning the support of each hieroglyph gives the positions for the ``blank tiles'' of a BPD.
\end{conjecture}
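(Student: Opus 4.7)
The plan is to build on the Hamaker--Pechenik--Weigandt framework \cite{HPW} and its partial resolution by Klein--Weigandt \cite{Klein.Weigandt}, combined with an explicit analysis of how polarization interacts with primary decomposition. Their work provides (conjecturally in general, and established for certain diagonal term orders) a correspondence between BPDs of $w$ and the top-dimensional components of $V(\mathrm{init}_{\prec'}(I_w))$ counted with scheme-theoretic multiplicity, where the generators of a minimal prime $P$ record the positions of blank tiles of the associated BPDs. Since polarization preserves the multidegree (Theorem~\ref{thm:polarizationpreserves}), the key mechanism underlying Conjecture~\ref{conj:BPD} must be that polarization converts the multiplicities of $J = \mathrm{init}_{\prec'}(I_w)$ into genuinely distinct minimal primes of $\widetilde{J}$, in a manner compatible with the BPD labeling.

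First, I would prove a general lemma on polarization and primary decomposition: given a monomial ideal $J$ with irredundant primary decomposition $J = \bigcap_i Q_i$, the minimal primes of $\widetilde{J}$ are enumerated by certain ``polarization variants'' of the associated primes of $J$, where a variant of $P = \langle x_{j_1},\ldots, x_{j_r}\rangle$ has the form $\langle \widetilde{x}_{j_1, a_1},\ldots, \widetilde{x}_{j_r, a_r}\rangle$ for admissible exponent choices $a_s$. My expectation is that the total count of polarization variants lying over a given minimal prime $P$ of $J$ equals the scheme-theoretic multiplicity of the $P$-primary component; this accounts combinatorially for the passage from ``multiplicity count'' to ``distinct primes count'' predicted by the conjecture.

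Second, specializing to $J = \mathrm{init}_{\prec'}(I_w)$ and combining with Klein--Weigandt / HPW, the structural lemma would produce a natural bijection between BPDs of $w$ and the minimal primes of $\widetilde{J}$ of maximum codimension. The ``blank tile'' conclusion then follows by tracking the forgetful map $\widetilde{x}_{ij,a}\mapsto x_{ij}$: it sends the hieroglyph support to the generators of the underlying prime of $J$, which by the Klein--Weigandt correspondence are exactly the positions of blank tiles in the BPD. To ensure every minimal prime of $\widetilde{J}$ is represented in $\mathcal{T}$, one must verify Conjecture~\ref{conj:equidim}; a plausible approach is to deduce equidimensionality of $\widetilde{J}$ from the Cohen--Macaulay property of $R/I_w$ \cite{Fulton:duke}, showing that both Gr\"obner degeneration and polarization preserve the relevant equidimensional structure in this setting.

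The main obstacle is the first step. Polarization is simple to describe on generators, but its effect on primary decomposition is subtle when $J$ has embedded primes --- which does occur for diagonal degenerations of $I_w$. Controlling the embedded primes alongside the minimal ones is essential because both influence the structure of $\widetilde{J}$, and this goes beyond the top-dimensional analysis currently in the literature. A secondary obstacle is that Klein--Weigandt's theorem covers only certain diagonal term orders, so extending it to arbitrary diagonal orders --- as needed for the full generality of Conjecture~\ref{conj:BPD} --- is likely to require an independent argument, possibly via combinatorial interpolation between term orders or a direct analysis of the resulting initial ideals.
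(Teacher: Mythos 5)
First, a point of order: Conjecture~\ref{conj:BPD} is stated in the paper as an \emph{open conjecture}. The authors give no proof --- they verify it in examples (chiefly for the lex-diagonal order) and explicitly contrast it with Theorem~\ref{thm:main}, which yields only the existence of \emph{some} bijection between the tablet and any other set of cardinality $\deg{\mathfrak X}_w$. So there is no argument in the paper to compare yours against; I can only assess your proposal on its own terms.

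On those terms, your central mechanism is right but your route to it is heavier than necessary, and the load-bearing step remains conditional. You do not need a general lemma describing how polarization interacts with the full primary decomposition of $J$ (embedded primes included); that statement is delicate and, happily, irrelevant, because the tablet by definition discards every hieroglyph that is not of minimal codimension. For the top-dimensional components, the counting you want already follows from results in the paper: give $R$ the finest $\integers^{n^2}$-grading (each $x_{ij}\mapsto t_{ij}$, each polarized variable inheriting the weight of its parent). Then additivity of multidegrees writes $\mathrm{mdeg}_{R/J}=\sum_S \mathrm{mult}_{P_S}(J)\prod_{(i,j)\in S}t_{ij}$ over top-codimension supports $S$, Theorem~\ref{thm:polarizationpreserves} and Theorem~\ref{thm:strongermain} write the same polynomial as $\sum_{{\sf H}\in{\mathcal T}}\prod t_{ij}$, and since all terms are nonnegative and the monomials $\prod_{(i,j)\in S}t_{ij}$ for distinct $S$ are linearly independent, the number of hieroglyphs with support $S$ equals $\mathrm{mult}_{P_S}(J)$. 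For the same reason your detour through Conjecture~\ref{conj:equidim} (and through Cohen--Macaulayness of the diagonal degeneration, itself open) is unnecessary: lower-dimensional minimal primes of $\widetilde J$ never enter the tablet, and no top-dimensional one can be lost since $\mathrm{codim}\,\widetilde J=\mathrm{codim}\,J$.

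What remains after this simplification is exactly the statement that $\mathrm{mult}_{P_S}(J)$ equals the number of BPDs with blank-tile set $S$ --- i.e., the Hamaker--Pechenik--Weigandt conjecture in its refined, component-by-component form. Note that this does \emph{not} follow from the BPD formula for (double) Schubert polynomials, because the products $\prod_{(i,j)\in S}(x_i-y_j)$ for distinct $S$ are not linearly independent; you genuinely need the geometric theorem of Klein--Weigandt, which covers only certain diagonal term orders. So your argument is a conditional reduction, not a proof, for general $\prec'$ --- as you acknowledge. Even where it applies, it produces only a \emph{support-preserving} bijection obtained by matching cardinalities fiber by fiber: for $w=214365$ there are two hieroglyphs and two BPDs sharing the support $\{(1,1),(1,2),(2,1)\}$, and nothing in your construction says which goes with which. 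If ``natural'' is read as merely ``support equals blank-tile set,'' your plan (restricted to Klein--Weigandt's orders) would close the conjecture; if the authors intend a canonical matching within each fiber --- which the word ``natural'' and the $\oplus$-decorations of the hieroglyphs suggest --- that content is untouched by your argument.
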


Theorem~\ref{thm:main} proves the existence of \emph{a} bijection (but not a ``natural'' one). We invite the reader to confirm Conjecture~\ref{conj:BPD} for $w=214365$ using the prime decomposition above. In particular, there are
exactly two BPDs with blank tiles in positions $(1,1),(1,2),(2,1)$ corresponding to the two hieroglyphs of Example~\ref{exa:214365}. We confess that Conjecture~\ref{conj:BPD} has only been really checked in examples for the
lex-diagonal term order.

\subsection{Other term orders}
For each of the (finitely many) other initial ideals for $I_w$ one obtains through Theorem~\ref{thm:main} alternatives to pipe dreams and bumpless pipe dreams. 

\begin{example}\label{exa:2143} Let $w=2143$. Here $I_w$ is generated by the northwest $1\times 1$
and $3\times 3$ minors of the generic matrix 
$Z=\left(\begin{smallmatrix} x_{11} & x_{12} & x_{13} & x_{14} \\ x_{21} & x_{22} & x_{23} &x_{24}  \\ x_{31} & x_{32} & x_{33} & x_{34}\\
x_{41} & x_{42} & x_{43} & x_{44}
\end{smallmatrix}\right)$. Let us start with the
lex-diagonal term order. In this case, the initial ideal is squarefree with 
${\widetilde J}=J=\langle x_{12},x_{11}\rangle \cap
\langle x_{21}, x_{11}\rangle \cap
\langle x_{33}, x_{11}\rangle$.
The tablet is therefore:
\[
\boxed{\boxed{\begin{matrix}
+ & + & \cdot & \cdot \\
\cdot & \cdot & \cdot & \cdot \\
\cdot & \cdot & \cdot & \cdot \\
\cdot & \cdot & \cdot & \cdot 
\end{matrix}} \ \ \ 
\boxed{\begin{matrix}
+ & \cdot & \cdot & \cdot \\
+ & \cdot & \cdot & \cdot \\
\cdot & \cdot & \cdot & \cdot \\
\cdot & \cdot & \cdot & \cdot 
\end{matrix}} \ \ \ 
\boxed{\begin{matrix}
+ & \cdot & \cdot & \cdot \\
\cdot & \cdot & \cdot & \cdot \\
\cdot & \cdot & + & \cdot \\
\cdot & \cdot & \cdot & \cdot 
\end{matrix}}}
\]
As the term order is lexicographic-diagonal,
Conjecture~\ref{conj:BPD} asserts that the hieroglyphs
in this tablet biject with BPDs for $w=2143$; this is true
here. 

For convenience, let us rename the lex-diagonal order the ``$1234$-order'' since we are using lex order on the variables read with the rows of $Z$ read in order $1,2,3,4$. Similarly, let us now discuss $1324$-order. Then the resulting tablet is
\[
\boxed{\boxed{\begin{matrix}
+ & + & \cdot & \cdot \\
\cdot & \cdot & \cdot & \cdot \\
\cdot & \cdot & \cdot & \cdot \\
\cdot & \cdot & \cdot & \cdot 
\end{matrix}} \ \ \ 
\boxed{\begin{matrix}
+ & \cdot & \cdot & \cdot \\
\cdot & \cdot & + & \cdot \\
\cdot & \cdot & \cdot & \cdot \\
\cdot & \cdot & \cdot & \cdot 
\end{matrix}} \ \ \ 
\boxed{\begin{matrix}
+ & \cdot & \cdot & \cdot \\
\cdot & \cdot & \cdot & \cdot \\
+ & \cdot & \cdot & \cdot \\
\cdot & \cdot & \cdot & \cdot 
\end{matrix}}}.
\]
 The tablets are the same for the orders 
 $1324, 3124, 3142, 3412$. However, a change in tablets occurs between $3412$ and $3421$, and
 $3421$ and $4321$ give the same tablet, which correspond to the pipe dreams:
\[
\boxed{\boxed{\begin{matrix}
+ & \cdot & + & \cdot \\
\cdot & \cdot & \cdot & \cdot \\
\cdot & \cdot & \cdot & \cdot \\
\cdot & \cdot & \cdot & \cdot 
\end{matrix}} \ \ \
\boxed{\begin{matrix}
+ & \cdot & \cdot & \cdot \\
\cdot & + & \cdot & \cdot \\
\cdot & \cdot & \cdot & \cdot \\
\cdot & \cdot & \cdot & \cdot 
\end{matrix}} \ \ \ 
\boxed{\begin{matrix}
+ & \cdot & \cdot & \cdot \\
\cdot & \cdot & \cdot & \cdot \\
+ & \cdot & \cdot & \cdot \\
\cdot & \cdot & \cdot & \cdot 
\end{matrix}}}.
\]
This makes sense since $4321$ order is an antidiagonal term order. In this way, one interpolates between the BPDs and pipe dreams for $w=2143$. \qed
\end{example}

For any permutation $w$, Theorem~\ref{thm:main} implies the existence of a bijection between the tablets for different term orders.\footnote{Recent work of Knutson--Udell \url{https://youtu.be/C8xARISi_X0} gives an interpolation between BPDs and pipe dreams. It would be interesting to compare the analysis above with their results.} Moreover, by virtue of Theorem~\ref{thm:strongermain}, one knows that the various pipe dreams all compute the (double) Schubert polynomials (see \cite{Fulton, Manivel} and references therein). This \emph{automatically} implies the
existence of many new unweighted formulas for these important families of polynomials from Schubert calculus.

\section{The commuting variety}\label{sec:Commuting}

The commuting variety $C_n$ is the
reduced subscheme of ${\sf Mat}_n\oplus {\sf Mat}_n$
consisting of pairs of matrices $(A,B)$ such that 
$[A,B]=AB-BA=0$. The $n^2$ many ``obvious'' defining equations are not known to define a radical ideal
(although conjecturally they do); let $I_n$ be this ideal. Many values of $\deg C_n$ are now known \url{https://oeis.org/A029729}.
\[1, 3, 31, 1145, 154881, 77899563, 147226330175, 1053765855157617, \ldots.\]
Use of Gr\"obner degeneration to study $C_n$ was initiated by Knutson in \cite{Knutson}.
Work of Knutson--Zinn-Justin \url{https://pi.math.cornell.edu/~allenk/caac2022.pdf} gives an incredible formula for these degrees as a \emph{weighted} sum. 

For $n=2$, we have $A=\left(\begin{smallmatrix} a_{11} & a_{12}\\ a_{21} & a_{22}\end{smallmatrix}\right)$ and
$B=\left(\begin{smallmatrix} b_{11} & b_{12}\\ b_{21} & b_{22}\end{smallmatrix}\right)$. Now
\begin{multline}\nonumber
I=I_2(=\sqrt{I_2})=\langle
- a_{21}b_{12} + a_{12}b_{21}, - a_{12}b_{11} + a_{11}b_{12} - a_{22}b_{12} + a_{12}b_{22}, \\
     a_{21}b_{11} - a_{11}b_{21} + a_{22}b_{21} - a_{21}b_{22}, a_{21}b_{12} - a_{12}b_{21}
\rangle
\end{multline}
Using the term order that reads the variables in the $A$ matrix first followed by those in $B$, namely, 
$a_{11}\succ a_{12}\succ a_{21}\succ a_{22} \succ b_{11}\succ b_{12}
\succ b_{21} \succ b_{22}$,
one finds that the initial ideal is squarefree and has
prime decomposition encoded by the tablet
\[\boxed{\boxed{\begin{matrix} \cdot & + \\ + & \cdot\end{matrix}}
\boxed{\begin{matrix} \cdot & \cdot \\ \cdot & \cdot\end{matrix}} 
\ \  
\boxed{\begin{matrix} \cdot & \cdot \\ + & \cdot\end{matrix}}
\boxed{\begin{matrix} + & \cdot \\ \cdot & \cdot\end{matrix}}
\ \   
\boxed{\begin{matrix} \cdot & \cdot \\ \cdot & \cdot\end{matrix}}
\boxed{\begin{matrix} + & + \\ \cdot & \cdot\end{matrix}}}
\text{ \ using coordinates  \
$\boxed{\begin{matrix} a_{11} & a_{12} \\ a_{21} & a_{22} \end{matrix}}
\boxed{\begin{matrix} b_{11} & b_{12} \\ b_{21} & b_{22}\end{matrix}}$
}
\]

The $n=3$ case is interesting from the perspective of our framework as it needs polarization. Let $I=I_3(=\sqrt{I_3})$ be defined in the same way as $I_2$. Then one computes that 
\begin{align*}
J= & \langle a_{31}b_{13}, a_{21}b_{13}, a_{31}b_{12}, a_{21}b_{12}, a_{31}b_{11}, a_{21}b_{11}, a_{13}b_{11}, a_{12}b_{11}, a_{32}b_{13}b_{22}, a_{32}b_{13}b_{21}, a_{32}b_{12}b_{21}, \\
\ & a_{23}b_{12}b_{21}, a_{13}b_{12}b_{21},        a_{13}a_{32}b_{21}, a_{12}a_{32}b_{21}, a_{13}a_{31}b_{21},
        a_{12}a_{31}b_{21}, a_{11}a_{31}b_{21}, a_{13}a_{22}b_{21}, a_{11}a_{13}b_{12},\\
\ &       a_{12}a_{31}^2b_{23}, a_{12}a_{31}^2b_{22},
        a_{12}a_{23}a_{31}b_{22}, a_{13}a_{21}a_{31}b_{22}, a_{12}a_{13}a_{31}b_{22}, a_{13}^2a_{21}a_{32}b_{22}\rangle.
\end{align*}
The initial ideal is not squarefree as it contains $a_{12}a_{31}^2b_{23}$, $a_{12}a_{31}^2b_{22}$,
and $a_{13}^2a_{21}a_{32}b_{22}$. To polarize, we introduce 
$c_{13}$ and $c_{31}$ and replace the non-squarefree
generators accordingly.
The prime decomposition has $32$ components, $31$ of which achieve the maximal dimension $6$. Hence the degree of the commuting scheme is $31$, as desired. 

The hieroglyphs are encoded using the coordinates
\[
\boxed{\begin{matrix} 
a_{11} & a_{12} & a_{13} \\ 
a_{21} & a_{22} & a_{23}\\
a_{31} & a_{32} & a_{33}
\end{matrix}}
\boxed{\begin{matrix} b_{11} & b_{12} & b_{13} \\ 
b_{21} & b_{22} & b_{23}\\
b_{31} & b_{32} & b_{33}
\end{matrix}} \text{ \ and \ }
\boxed{\begin{matrix} 
a_{11} & a_{12} & c_{13} \\ 
a_{21} & a_{22} & a_{23}\\
c_{31} & a_{32} & a_{33}
\end{matrix}}
\boxed{\begin{matrix} b_{11} & b_{12} & b_{13} \\ 
b_{21} & b_{22} & b_{23}\\
b_{31} & b_{32} & b_{33}
\end{matrix}}
\]
where $a_{13},a_{31}\mapsto +$ and $c_{13},c_{31}\mapsto \oplus$. The tablet consists of these $31$ hieroglyphs:

\begin{empheq}[box=\fbox]{align*}
    \boxed{\begin{matrix} 
    + & + & + \\ 
    \cdot & \cdot &\cdot\\
    \cdot & \cdot & \cdot
    \end{matrix}}
    \boxed{\begin{matrix} + & + & + \\ 
    \cdot & \cdot &\cdot\\
    \cdot & \cdot & \cdot
    \end{matrix}} \ \ \ 
    \boxed{\begin{matrix} 
    + & \cdot & \cdot \\ 
    + & \cdot &\cdot\\
    + & + & \cdot
    \end{matrix}}
    \boxed{\begin{matrix} + & \cdot & \cdot \\ 
    + & \cdot &\cdot\\
    \cdot & \cdot & \cdot
    \end{matrix}} \ &\ \
    \boxed{\begin{matrix} 
    + & \cdot & \cdot \\ 
    + & \cdot &\cdot\\
    + & \cdot & \cdot
    \end{matrix}}
    \boxed{\begin{matrix} + & \cdot & + \\ 
    + & \cdot &\cdot\\
    \cdot & \cdot & \cdot
    \end{matrix}} \ \ \ 
    \boxed{\begin{matrix} 
    + & \cdot & \cdot \\ 
    + & \cdot &\cdot\\
    + & \cdot & \cdot
    \end{matrix}}
    \boxed{\begin{matrix} + & \cdot & \cdot \\ 
    + & + & \cdot\\
    \cdot & \cdot & \cdot
    \end{matrix}}\\
    \boxed{\begin{matrix} 
    \cdot & + & + \\ 
    + & \cdot & + \\
    + & + & \cdot
    \end{matrix}}
    \boxed{\begin{matrix} \cdot & \cdot & \cdot \\ 
    \cdot & \cdot & \cdot\\
    \cdot & \cdot & \cdot
    \end{matrix}} \ \ \ 
    \boxed{\begin{matrix} 
    \cdot & + & + \\ 
    + & \cdot & \cdot \\
    + & + & \cdot
    \end{matrix}}
    \boxed{\begin{matrix} \cdot & + & \cdot \\ 
    \cdot & \cdot & \cdot\\
    \cdot & \cdot & \cdot
    \end{matrix}} \ &\ \
    \boxed{\begin{matrix} 
    \cdot & + & + \\ 
    + & \cdot & \cdot \\
    + & + & \cdot
    \end{matrix}}
    \boxed{\begin{matrix} \cdot & \cdot & \cdot \\ 
    + & \cdot & \cdot\\
    \cdot & \cdot & \cdot
    \end{matrix}}  \ \ \ 
    \boxed{\begin{matrix} 
    \cdot & + & + \\ 
    + & \cdot & \cdot \\
    + & \cdot & \cdot
    \end{matrix}}
    \boxed{\begin{matrix} \cdot & + & + \\ 
    \cdot & \cdot & \cdot\\
    \cdot & \cdot & \cdot
    \end{matrix}}\\
    \boxed{\begin{matrix} 
    \cdot & + & + \\ 
    + & \cdot & \cdot \\
    + & \cdot & \cdot
    \end{matrix}}
    \boxed{\begin{matrix} \cdot & \cdot & + \\ 
    + & \cdot & \cdot\\
    \cdot & \cdot & \cdot
    \end{matrix}} \ \ \ 
    \boxed{\begin{matrix} 
    \cdot & + & + \\ 
    + & \cdot & \cdot \\
    + & \cdot & \cdot
    \end{matrix}}
    \boxed{\begin{matrix} \cdot & \cdot & \cdot \\ 
    + & + & \cdot\\
    \cdot & \cdot & \cdot
    \end{matrix}} \ &\ \
    \boxed{\begin{matrix} 
    \cdot & + & + \\ 
    \cdot & \cdot & \cdot \\
    + & \cdot & \cdot
    \end{matrix}}
    \boxed{\begin{matrix} + & + & + \\ 
    \cdot & \cdot & \cdot\\
    \cdot & \cdot & \cdot
    \end{matrix}} \ \ \ 
    \boxed{\begin{matrix} 
    \cdot & + & + \\ 
    \cdot & \cdot & \cdot \\
    \cdot & \cdot & \cdot
    \end{matrix}}
    \boxed{\begin{matrix} + & + & + \\ 
    + & \cdot & \cdot\\
    \cdot & \cdot & \cdot
    \end{matrix}}\\
    \boxed{\begin{matrix} 
    \cdot & + & \cdot \\ 
    + & \cdot & \cdot \\
    \cdot & \cdot & \cdot
    \end{matrix}}
    \boxed{\begin{matrix} + & + & + \\ 
    + & \cdot & \cdot\\
    \cdot & \cdot & \cdot
    \end{matrix}} \ \  \
    \boxed{\begin{matrix} 
    \cdot & + & \cdot \\ 
    \cdot & \cdot & \cdot \\
    \cdot & \cdot & \cdot
    \end{matrix}}
    \boxed{\begin{matrix} + & + & + \\ 
    + & + & \cdot\\
    \cdot & \cdot & \cdot
    \end{matrix}}  \ &\ \ 
    \boxed{\begin{matrix} 
    \cdot & \cdot & + \\ 
    + & \cdot & + \\
    + & + & \cdot
    \end{matrix}}
    \boxed{\begin{matrix} + & \cdot & \cdot \\ 
    \cdot & \cdot & \cdot\\
    \cdot & \cdot & \cdot
    \end{matrix}} \ \ \ 
    \boxed{\begin{matrix} 
    \cdot & \cdot & + \\ 
    + & \cdot & \cdot \\
    + & + & \cdot
    \end{matrix}}
    \boxed{\begin{matrix} + & + & \cdot \\ 
    \cdot & \cdot & \cdot\\
    \cdot & \cdot & \cdot
    \end{matrix}}\\
    \boxed{\begin{matrix} 
    \cdot & \cdot & + \\ 
    + & \cdot & \cdot \\
    + & + & \cdot
    \end{matrix}}
    \boxed{\begin{matrix} + & \cdot & \cdot \\ 
    + & \cdot & \cdot\\
    \cdot & \cdot & \cdot
    \end{matrix}} \ \ \ 
    \boxed{\begin{matrix} 
    \cdot & \cdot & + \\ 
    + & \cdot & \cdot \\
    + & \cdot & \cdot
    \end{matrix}}
    \boxed{\begin{matrix} + & \cdot & + \\ 
    + & \cdot & \cdot\\
    \cdot & \cdot & \cdot
    \end{matrix}} \ &\ \ 
    \boxed{\begin{matrix} 
    \cdot & \cdot & + \\ 
    + & \cdot & \cdot \\
    + & \cdot & \cdot
    \end{matrix}}
    \boxed{\begin{matrix} + & \cdot & \cdot \\ 
    + & + & \cdot\\
    \cdot & \cdot & \cdot
    \end{matrix}} \ \ \ 
    \boxed{\begin{matrix} 
    \cdot & \cdot & + \\ 
    \cdot & \cdot & \cdot \\
    + & + & \cdot
    \end{matrix}}
    \boxed{\begin{matrix} + & + & + \\ 
    \cdot & \cdot & \cdot\\
    \cdot & \cdot & \cdot
    \end{matrix}}\\
    \boxed{\begin{matrix} 
    \cdot & \cdot & + \\ 
    \cdot & \cdot & \cdot \\
    + & \cdot & \cdot
    \end{matrix}}
    \boxed{\begin{matrix} + & + & + \\ 
    + & \cdot & \cdot\\
    \cdot & \cdot & \cdot
    \end{matrix}} \ \ \ 
    \boxed{\begin{matrix} 
    \cdot & \cdot & \oplus \\ 
    \cdot & \cdot & \cdot \\
    + & \cdot & \cdot
    \end{matrix}}
    \boxed{\begin{matrix} + & + & + \\ 
    + & \cdot & \cdot\\
    \cdot & \cdot & \cdot
    \end{matrix}} \ &\ \ 
    \boxed{\begin{matrix} 
    \cdot & \cdot & \cdot \\ 
    + & + & \cdot \\
    + & + & \cdot
    \end{matrix}}
    \boxed{\begin{matrix} + & + & \cdot \\ 
    \cdot & \cdot & \cdot\\
    \cdot & \cdot & \cdot
    \end{matrix}} \ \ \ 
    \boxed{\begin{matrix} 
    \cdot & \cdot & \cdot \\ 
    + & \cdot & \cdot \\
    + & + & \cdot
    \end{matrix}}
    \boxed{\begin{matrix} + & + & \cdot \\ 
    + & \cdot & \cdot\\
    \cdot & \cdot & \cdot
    \end{matrix}}\\
\boxed{\begin{matrix} 
    \cdot & \cdot & \cdot \\ 
    + & \cdot & \cdot \\
    + & \cdot & \cdot
    \end{matrix}}
    \boxed{\begin{matrix} + & + & + \\ 
    + & \cdot & \cdot\\
    \cdot & \cdot & \cdot
    \end{matrix}} \ \ \ 
    \boxed{\begin{matrix} 
    \cdot & \cdot & \cdot \\ 
    + & \cdot & \cdot \\
    + & \cdot & \cdot
    \end{matrix}}
    \boxed{\begin{matrix} + & + & \cdot \\ 
    + & + & \cdot\\
    \cdot & \cdot & \cdot
    \end{matrix}} \ &\ \ 
    \boxed{\begin{matrix} 
    \cdot & \cdot & \cdot \\ 
    \cdot & + & \cdot \\
    + & + & \cdot
    \end{matrix}}
    \boxed{\begin{matrix} + & + & + \\ 
    \cdot & \cdot & \cdot\\
    \cdot & \cdot & \cdot
    \end{matrix}}  \ \ \ 
    \boxed{\begin{matrix} 
    \cdot & \cdot & \cdot \\ 
    \cdot & \cdot & \cdot \\
    + & + & \cdot
    \end{matrix}}
    \boxed{\begin{matrix} + & + & + \\ 
    + & \cdot & \cdot\\
    \cdot & \cdot & \cdot
    \end{matrix}}\\
    \boxed{\begin{matrix} 
    \cdot & \cdot & \cdot \\ 
    \cdot & \cdot & \cdot \\
    + & \cdot & \cdot
    \end{matrix}}
    \boxed{\begin{matrix} + & + & + \\ 
    + & + & \cdot\\
    \cdot & \cdot & \cdot
    \end{matrix}} \ \ \ 
    \boxed{\begin{matrix} 
    \cdot & \cdot & \cdot \\ 
    \cdot & \cdot & \cdot \\
    \oplus & \cdot & \cdot
    \end{matrix}}
    &\boxed{\begin{matrix} + & + & + \\ 
    + & + & \cdot\\
    \cdot & \cdot & \cdot
    \end{matrix}} \ \ \ 
    \boxed{\begin{matrix} 
    \cdot & \cdot & \cdot \\ 
    \cdot & \cdot & \cdot \\
    \cdot & \cdot & \cdot
    \end{matrix}}
    \boxed{\begin{matrix} + & + & + \\ 
    + & + & +\\
    \cdot & \cdot & \cdot
    \end{matrix}} \ \ \
\end{empheq}
    
The additional component of lower dimension corresponds to
$\boxed{\begin{matrix} 
\cdot & \cdot & + \\ 
\cdot & \cdot & + \\
\oplus & \cdot & \cdot
\end{matrix}}
\boxed{\begin{matrix} + & + & + \\ 
+ & \cdot & \cdot \\
\cdot & \cdot & \cdot
\end{matrix}}$.
The formula of Knutson--Zinn-Justin expresses
$31=1+2+2+2+4+4+8+8$ as a weighted sum of eight \emph{generic pipe dreams}. This numerical pattern,
along with our usage of $\oplus$, leads to the suspicion
that the $n=3$ tablet fundamentally differs from their rule.
How does one decipher the tablets for $n=2,3$ above (or find an alternate tablet with an interpretation)?

\section{Hilbert-Samuel multiplicities, especially of Schubert varieties}\label{sec:HS}

\subsection{Hilbert-Samuel multiplicities} \label{sec:HSmult} 
The \emph{projectivized tangent cone} $TC_p({\mathfrak X})$ at $p={\bf 0}$ to ${\mathfrak X}=V(J)$ is the projective variety of ${\mathbb P}^{n-1}$ defined by the \emph{tangent cone ideal}, which is the
homogeneous ideal $I$
generated by the lowest degree \emph{forms} of every $f\in J$. The \emph{Hilbert-Samuel multiplicity} of 
${\mathfrak X}$ at $p={\bf 0}$ is the degree of $TC_p({\mathfrak X})$ in ${\mathbb P}^{n-1}$. 

Suppose ${\mathfrak X}$ is an arbitrary variety and $p\in {\mathfrak X}$. The projectivized tangent cone and the  Hilbert-Samuel multiplicity of ${\mathfrak X}$ at $p$ is defined by first picking an affine open neighborhood around $p$ with coordinates
where $p$ is ${\bf 0}$. Suppose this neighborhood is the affine
variety defined by $R/J$. Then the \emph{Hilbert-Samuel multiplicity} of ${\mathfrak X}$ at $p$, denoted ${\mathrm{ mult}}_p({\mathfrak X})$, is the Hilbert-Samuel multiplicity of $V(J)$ as defined in the previous paragraph. 

The size of ${\mathrm{ mult}}_p({\mathfrak X})$ measures ``how 
singular'' ${\mathfrak X}$ is at $p$. For instance, ${\mathrm{ mult}}_p({\mathfrak X})=1 \iff p$ is smooth in ${\mathfrak X}$. For many subvarieties of the complete flag variety $GL_n({\mathbb C})/B$ one seeks
a counting rule for ${\mathrm{mult}}_p({\mathfrak X})$; see, e.g., 
work on \emph{Peterson varieties} \cite[Section~7]{Insko} and \emph{spherical symmetric orbit closures} \cite[Section~7]{WWY}. However, it is for \textit{Schubert varieties} that
a multiplicity rule has long been sought after.

\subsection{Schubert varieties}
Let $B$ be the Borel subgroup of invertible upper triangular matrices in $GL_n({\mathbb C})$. $B$ acts on $GL_n({\mathbb C})/B$ with finitely many orbits $BwB/B$ where $w\in S_n$ is viewed as a permutation matrix. Their closures 
$X_w=\overline{BwB/B}$
are the \emph{Schubert varieties}. Let $T\subset GL_n({\mathbb C})$ be the subgroup of invertible diagonal matrices. $GL_n/B$ has $n!$ fixed points $p_v=vB/B$, one for each $v\in S_n$. Such a fixed point appears in $X_w$ if and only if $v\leq w$ in Bruhat order. We refer to the survey \cite{WY} for more about Schubert varieties in the conventions used here. 

In \cite{WY:Govern, WY:Grob, WY}, one uses \emph{Kazhdan-Lusztig varieties}
${\mathcal N}_{v,w}$ with a specific choice of coordinates and equations to define (up to a factor of affine space) a local neighborhood of $p_v\in X_w$.
That is ${\mathcal N}_{v,w}={\mathrm{Spec}}(R/I_{v,w})$ 
for the \emph{Kazhdan-Lusztig ideal} $I_{v,w}$.\footnote{These equations have been implemented in the {\tt Macaualay2} code \url{https://faculty.math.illinois.edu/~ayong/Schubsingular.v0.2.m2}.}
Here $R$ is the coordinate ring of the matrix $Z_{v}$ defined as follows. First place  ``$1$'' in column $j$ and row $v(j)$ (from the top).
Next, set all matrix positions right and above a ``$1$'' to $0$. Finally, in the remaining spots, put  $x_{ij}$ in row $i$ from the \emph{bottom} and column $j$ from the left. Hence, the multiplicity problem for $X_w$ falls into Theorem~\ref{thm:main}'s purview, where $I\subset R$ is the tangent cone ideal of $I_{v,w}$.

The paper \cite[Theorem~6.1]{LiLiYong} gives a rule for ${\mathrm{ mult}}_{p_v}(X_w)$
whenever $w$ is \emph{covexillary} ($3412$-avoiding). It produces a term order 
$\prec$ on $R$ such that $J={\mathrm{init}}_{\prec} I$ matches the initial ideals for vexillary matrix Schubert varieties studied in
\cite{KMY}.\footnote{The use of covexillary in the former and vexillary in the latter comes from a difference in convention. The interesting point is the use of comparisons of tablets for \emph{different problems}.}   This order depends on $v$ and $w$.
Now, \cite[Conjecture~8.1]{LiLiYong} conjectures that under a ``SE-NW'' term order that orders the variables from reading columns right to left and bottom to top, the limit is  reduced. 

\begin{example}
Let us pick this ``SE-NW'' order; by virtue of Theorem~\ref{thm:main} the fact that we cannot \emph{prove} the limit is reduced
is moot. We now
compute the tablet for a multiplicity problem which is not covered by any combinatorial rule we know of. One such example is
$w=463512$ and $v=id$. \footnote{$(id)B/B$ is the point with the largest multiplicity in $X_w$ and in some sense, the ``worst case''.}
Here, $Z_v$ is simply a lower triangular unipotent matrix:
\[Z_v=\left(\begin{matrix}
1 & 0 & 0 & 0 & 0 & 0\\
x_{51} & 1 & 0 & 0 & 0 & 0\\
x_{41} & x_{42} & 1 & 0 & 0 & 0\\
x_{31} & x_{32} & x_{33} & 1 & 0 & 0\\
x_{21} & x_{22} & x_{23} & x_{24} & 1 & 0\\
x_{11} & x_{12} & x_{13} & x_{14} & x_{15} & 1
\end{matrix}\right).\]
%
The choice of $w=463512$ defines $I_{v,w}$ to be generated by $x_{11},x_{21}$,
the $2\times 2$ minors of the southwest
$2\times 3$ submatrix of $Z_v$, and the
southwest $5\times 5$ minor.
The tangent cone is
\[I= \langle x_{21}, x_{11}, x_{13}x_{22} - x_{23}x_{12}, x_{14}x_{31} + x_{13}x_{41} 
+ x_{12}x_{51}\rangle.\]
The initial ideal is squarefree so
$\widetilde{J}=J=\langle x_{21}, x_{11}, x_{13}x_{22}, 
x_{14}x_{31}\rangle$.
It is also true that $\widetilde{J}$ is equidimensional. Thus the four hieroglyphs below encode the components
with the natural coordinates:
\[\boxed{
\boxed{\begin{matrix}
\cdot & \ & \ & \ & \ \\
\cdot & \cdot & \ & \ & \ \\
\cdot & \cdot & \cdot & \ & \ \\
+ & \cdot & + & \cdot & \ \\
+ & \cdot & + & \cdot & \cdot \\
\end{matrix}} \ \ 
\boxed{\begin{matrix}
\cdot & \ & \ & \ & \ \\
\cdot & \cdot & \ & \ & \ \\
\cdot & \cdot & \cdot & \ & \ \\
+ & + & \cdot & \cdot & \ \\
+ & \cdot & \cdot & + & \cdot \\
\end{matrix}} \ \ 
\boxed{\begin{matrix}
\cdot & \ & \ & \ & \ \\
\cdot & \cdot & \ & \ & \ \\
+ & \cdot & \cdot & \ & \ \\
+ & \cdot & \cdot & \cdot & \ \\
+ & \cdot & + & \cdot & \cdot \\
\end{matrix}} \ \ 
\boxed{\begin{matrix}
\cdot & \ & \ & \ & \ \\
\cdot & \cdot & \ & \ & \ \\
+ & \cdot & \cdot & \ & \ \\
+ & + & \cdot & \cdot & \ \\
+ & \cdot & \cdot & \cdot & \cdot \\
\end{matrix}} }
\]
That is, ${\mathrm{mult}}_{p_{id}}(X_{463512})=4$.\qed
%
\end{example}
Theorem~\ref{thm:main} indicates the existence of a general unweighted rule for ${\mathrm{mult}}_{p_v}(X_w)$ in a form similar to that of \cite[Theorem~6.1]{LiLiYong}, but with respect to a universal term order rather than one that depends on $v$ or $w$. Intriguingly, the production of the tablets poses no problem -- there is ample data to study. However, the combinatorial challenge of succinctly describing the tablets generated by the combinatorial commutative algebra rule still persists.
\section*{Acknowledgements}
AY thanks  Shiliang Gao, Allen Knutson, Li Li, Ezra Miller, Anna Weigandt, and Alexander Woo from whom he learned about Gr\"obner geometry through collaborations. We thank Avery St. Dizier for sharing his code to compute bumpless pipe dreams. We made use of {\tt Macaulay2} to execute our
computations.
AS was supported by a Susan C.~Morosato IGL graduate student scholarship. 
AY was supported by a Simons Collaboration grant. Both authors were partially supported by an NSF RTG in Combinatorics (DMS 1937241).


\begin{thebibliography}{99}

\bibitem{BB}
Bergeron, Nantel; Billey, Sara. RC-graphs and Schubert polynomials. Experiment. Math. 2 (1993), no. 4, 257--269.

\bibitem{BH}
Bruns, Winfried; Herzog, J\"urgen. Cohen-Macaulay Rings. Cambridge Studies in Advanced Mathematics, 39. Cambridge University Press, Cambridge, 1998. 2nd ed.

\bibitem{CLO}
Cox, David A.; Little, John; O'Shea, Donal. Ideals, varieties, and algorithms. An introduction to computational algebraic geometry and commutative algebra. Fourth edition. Undergraduate Texts in Mathematics. Springer, Cham, 2015. xvi+646 pp.

\bibitem{Fomin.Kirillov93}
Fomin, Sergey; Kirillov, Anatol N. The Yang-Baxter equation, symmetric functions, and Schubert polynomials. Proceedings of the 5th Conference on Formal Power Series and Algebraic Combinatorics (Florence, 1993). Discrete Math. 153 (1996), no. 1-3, 123--143. 

\bibitem{Fulton:duke} 
Fulton, William. Flags, Schubert polynomials, degeneracy loci, and determinantal formulas. Duke Math. J. 65 (1992), no. 3, 381--420.

\bibitem{Fulton}
Fulton, William. Young tableaux. With applications to representation theory and geometry. London Mathematical Society Student Texts, 35. Cambridge University Press, Cambridge, 1997. {\rm x}+260 pp

\bibitem{GaoYong}
Gao, Shiliang; Yong, Alexander. Minimal equations for matrix Schubert varieties, preprint, 2022. \textsf{arXiv:2201.06522}

\bibitem{HPW}
Hamaker, Zachary; Pechenik, Oliver; Weigandt, Anna. Gr\"obner geometry of Schubert polynomials through ice. Adv. Math. 398 (2022), Paper No. 108228, 29 pp. 

\bibitem{Harris}
Harris, Joe. Algebraic geometry. A first course. Corrected reprint of the 1992 original. Graduate Texts in Mathematics, 133. Springer-Verlag, New York, 1995. {\rm xx}+328 pp. 

\bibitem{Herzog}
Herzog, J\"{u}rgen; Hibi, Takayuki. Monomial Ideals. Graduate Texts in Mathematics, 260. Springer-Verlag, London, 2011. {\rm xvi}+305 pp.

\bibitem{Insko}
Insko, Erik; Yong, Alexander. Patch ideals and Peterson varieties. Transform. Groups 17 (2012), no. 4, 1011--1036. 

\bibitem{Kinser}
Kinser, Ryan; Knutson, Allen; Rajchgot, Jenna. Three combinatorial formulas for type $A$ quiver polynomials and $K$-polynomials. Duke Math. J. 168 (2019), no. 4, 505--551.

\bibitem{Knutson}
Knutson, Allen. Some schemes related to the commuting variety. J. Algebraic Geom. 14 (2005), no. 2, 283--294. 

\bibitem{Knutson:frob}
Knutson, Allen. 
Frobenius splitting, point-counting, and degeneration, preprint, 2009. \textsf{arXiv:0911.4941}

\bibitem{Klein.Weigandt}
Klein, Patricia; Weigandt, Anna. Bumpless pipe dreams encode Gr\"{o}bner geometry of Schubert polynomials. S\'{e}m. Lothar. Combin. 86B (2022), Art. 84, 12 pp. 

\bibitem{Knutson.Miller}
Knutson, Allen; Miller, Ezra. Gr\"obner geometry of Schubert polynomials. Ann. of Math. (2) 161 (2005), no. 3, 1245--1318. 

\bibitem{Knutson.Miller:subword}
Knutson, Allen; Miller, Ezra. Subword complexes in Coxeter groups. Adv. Math. 184 (2004), no. 1, 161--176.


\bibitem{KMS}
Knutson, Allen; Miller, Ezra; Shimozono, Mark. Four positive formulae for type $A$ quiver polynomials. Invent. Math. 166 (2006), no. 2, 229--325.

\bibitem{KMY}
Knutson, Allen; Miller, Ezra; Yong, Alexander. Gr\"{o}bner geometry of vertex decompositions and of flagged tableaux. J. Reine Angew. Math. 630 (2009), 1--31. 

\bibitem{KMY2}
Knutson, Allen; Miller, Ezra; Yong, Alexander. Tableau complexes. Israel J. Math. 163 (2008), 317--343. 


\bibitem{Knutson.Yong}
Knutson, Allen; Yong, Alexander. A formula for $K$-theory truncation Schubert calculus. Int. Math. Res. Not. 2004, no. 70, 3741--3756.

\bibitem{LiLiYong}
Li, Li; Yong, Alexander. Some degenerations of Kazhdan-Lusztig ideals and multiplicities of Schubert varieties. Adv. Math. 229 (2012), no. 1, 633--667.

\bibitem{Manivel}
Manivel, Laurent. Symmetric functions, Schubert polynomials and degeneracy loci. Translated from the 1998 French original by John R. Swallow. SMF/AMS Texts and Monographs, 6. Cours Sp\'ecialis\'es [Specialized Courses], 3. American Mathematical Society, Providence, RI; Soci\'et\'e Math\'ematique de France, Paris, 2001. viii+167 pp. 

\bibitem{Pawlowski}
Marberg, Eric; Pawlowski, Brendan. Gr\"obner geometry for skew-symmetric matrix Schubert varieties. Adv. Math. 405 (2022), Paper No. 108488, 56 pp.

\bibitem{Miller.Sturmfels}
Miller, Ezra; Sturmfels, Bernd. Combinatorial commutative algebra. Graduate Texts in Mathematics, 227. Springer-Verlag, New York, 2005. xiv+417 pp.

\bibitem{Weigandt}
Weigandt, Anna. Prism tableaux for alternating sign matrix varieties. S\'em. Lothar. Combin. 80B (2018), Art. 52, 12 pp. 

\bibitem{Weigandt.Yong}
Weigandt, Anna; Yong, Alexander. The prism tableau model for Schubert polynomials. J. Combin. Theory Ser. A 154 (2018), 551--582. 

\bibitem{WWY}
Woo, Alexander; Wyser, Benjamin; Yong, Alexander. Governing singularities of symmetric orbit closures. Algebra Number Theory 12 (2018), no. 1, 173--225.

\bibitem{WY:Govern}
Woo, Alexander; Yong, Alexander. Governing singularities of Schubert varieties. J. Algebra 320 (2008), no. 2, 495--520.

\bibitem{WY:Grob}
Woo, Alexander; Yong, Alexander. A Gr\"{o}bner basis for Kazhdan-Lusztig ideals. Amer. J. Math. 134 (2012), no. 4, 1089--1137.

\bibitem{WY}
Woo, Alexander; Yong; Alexander. Schubert geometry and combinatorics, preprint, 2023. \textsf{arXiv:arXiv:2303.01436}

\bibitem{Yong}
Yong, Alexander. Castelnuovo-Mumford regularity and Schubert geometry, preprint, 2022. \textsf{arXiv:2202.06462}
\end{thebibliography}
\end{document}